\def\@settitle{%
  \vspace*{-20pt}
  \begin{flushleft}%
    \baselineskip14\p@\relax
    \normalfont\bfseries\LARGE
    \@title
  \end{flushleft}%
}
\def\@setauthors{%
  \begingroup
  \def\thanks{\protect\thanks@warning}%
  \trivlist
  \large \@topsep30\p@\relax
  \advance\@topsep by -\baselineskip
  \item\relax
  \author@andify\authors
  \def\\{\protect\linebreak}%
  \authors
  \ifx\@empty\contribs
  \else
    ,\penalty-3 \space \@setcontribs
    \@closetoccontribs
  \fi
  \normalfont
  \@setaddresses
  \endtrivlist
  \endgroup
}
\def\@setaddresses{\par
  \nobreak \begingroup\raggedright
  \small
  \def\author##1{\nobreak\addvspace\smallskipamount}%
  \def\\{\unskip, \ignorespaces}%
  \interlinepenalty\@M
  \def\address##1##2{\begingroup
    \par\addvspace\bigskipamount\noindent
    \@ifnotempty{##1}{(\ignorespaces##1\unskip) }%
    {\ignorespaces##2}\par\endgroup}%
  \def\curraddr##1##2{\begingroup
    \@ifnotempty{##2}{\nobreak\noindent\curraddrname
      \@ifnotempty{##1}{, \ignorespaces##1\unskip}\/:\space
      ##2\par}\endgroup}%
  \def\email##1##2{\begingroup
    \@ifnotempty{##2}{\smallskip\nobreak\noindent E-mail address%
      \@ifnotempty{##1}{, \ignorespaces##1\unskip}\/:\space
      \ttfamily##2\par}\endgroup}%
  \def\urladdr##1##2{\begingroup
    \def~{\char`\~}%
    \@ifnotempty{##2}{\nobreak\noindent\urladdrname
      \@ifnotempty{##1}{, \ignorespaces##1\unskip}\/:\space
      \ttfamily##2\par}\endgroup}%
  \addresses
  \endgroup
  \global\let\addresses=\@empty
}
\def\@setabstracta{%
    \ifvoid\abstractbox
  \else
    \skip@25\p@ \advance\skip@-\lastskip
    \advance\skip@-\baselineskip \vskip\skip@
    \box\abstractbox
    \prevdepth\z@ 
    \vskip-10pt
  \fi
}
\renewenvironment{abstract}{%
  \ifx\maketitle\relax
    \ClassWarning{\@classname}{Abstract should precede
      \protect\maketitle\space in AMS document classes; reported}%
  \fi
  \global\setbox\abstractbox=\vtop \bgroup
    \normalfont\small
    \list{}{\labelwidth\z@
      \leftmargin0pc \rightmargin\leftmargin
      \listparindent\normalparindent \itemindent\z@
      \parsep\z@ \@plus\p@
      
    }%
    \item[\hskip\labelsep\bfseries\abstractname.]%
}{%
  \endlist\egroup
  \ifx\@setabstract\relax \@setabstracta \fi
}
\def\section{\@startsection{section}{1}%
  \z@{-1.2\linespacing\@plus-.5\linespacing}{.8\linespacing}%
  {\normalfont\bfseries\large}}
\def\subsection{\@startsection{subsection}{2}%
  \z@{-.8\linespacing\@plus-.3\linespacing}{.3\linespacing\@plus.2\linespacing}%
  {\normalfont\bfseries}}
\def\subsubsection{\@startsection{subsubsection}{3}%
  \z@{.7\linespacing\@plus.1\linespacing}{-1.5ex}%
  {\normalfont\itshape}}
\def\@secnumfont{\bfseries}
\def\Z{\mathbb{Z}}
\def\Q{\mathbb{Q}}
\def\d{\partial}
\def\Ker{\operatorname{Ker}}
\def\Coker{\operatorname{Coker}}
\def\+{\oplus}
\def\hat{\widehat}
\theoremstyle{plain}
\newtheorem{theorem}{Theorem}[section]
\newtheorem{proposition}[theorem]{Proposition}
\newtheorem{corollary}[theorem]{Corollary}
\newtheorem{lemma}[theorem]{Lemma}
\theoremstyle{definition}
\newtheorem{definition}[theorem]{Definition}
\newtheorem{example}[theorem]{Example}
\newtheorem{remark}[theorem]{Remark}
\newtheorem*{question}{Question}
\def\to{\mathchoice{\longrightarrow}{\rightarrow}{\rightarrow}{\rightarrow}}
\newcommand{\shortxra}[2][]{\ext@arrow 0359\rightarrowfill@{#1}{#2}}
\def\longrightarrowfill@{\arrowfill@\relbar\relbar\longrightarrow}
\newcommand{\longxra}[2][]{\ext@arrow 0359\longrightarrowfill@{#1}{#2}}
\renewcommand{\xrightarrow}[2][]{\mathchoice{\longxra[#1]{#2}}%
  {\shortxra[#1]{#2}}{\shortxra[#1]{#2}}{\shortxra[#1]{#2}}}
\begin{document}

\title [On rational sliceness of Miyazaki's fibered, $-$amphicheiral knots]
{On rational sliceness of Miyazaki's fibered, $-$amphicheiral knots}

\author{Min Hoon Kim}
\address{
  School of Mathematics\\
  Korea Institute for Advanced Study \\
  Seoul 130--722\\
  Republic of Korea
}
\email{kminhoon@gmail.com}
\author{Zhongtao Wu}
\address{
Department of Mathematics\\
Lady Shaw Building\\
The Chinese University of Hong Kong\\
Shatin\\
Hong Kong
}
\email{ztwu@math.cuhk.edu.hk}



\maketitle
\begin{abstract}We prove that certain fibered, $-$amphicheiral knots are rationally slice.  Moreover, we show that the concordance invariants $\nu^+$ and $\Upsilon(t)$ from Heegaard Floer homology vanish for a class of knots that includes rationally slice knots.
\end{abstract}
\section{Introduction}
Recall that a knot $K\subset S^3$ is called \emph{slice} if it bounds an embedded disk $D$ in $D^4$, and it is called \emph{ribbon} if it bounds an immersed disk in $S^3$ with only ribbon singularities.\footnote{The present paper considers smooth category unless otherwise specified.} One easily sees that every ribbon knot is a slice knot.  An outstanding open problem, posed by Fox and known as the \emph{slice-ribbon conjecture}, asks if the converse is true. As an attempt to approach the problem, Casson and Gordon introduced the notion of \emph{homotopy ribbon} in \cite{Casson-Gordon:1983-1}. A knot $K$ is homotopy ribbon if it bounds an embedded disk $D$ in a homotopy 4-ball $V$ so that the inclusion induced map $$\pi_1(S^3-K)\rightarrow \pi_1(V-D)$$ is surjective. Since every ribbon knot is homotopy ribbon, the slice-ribbon problem can be divided into two parts, namely whether every slice knot is homotopy ribbon, and whether every homotopy ribbon knot is ribbon \cite[Problem 4.22]{Kirby:problem-list-1995-edition}.

In \cite[Theorem 5.1]{Casson-Gordon:1983-1}, it is proved that a fibered knot is homotopy ribbon if and only if the monodromy of its fiber extends over the handlebody. Hinging on this idea, Miyazaki showed that the connected sum of iterated torus knots $T_{2,3;2,13}\#T_{2,15}\#-T_{2,3;2,15}\#-T_{2,13}$ is algebraically slice but not homotopy ribbon \cite[Example 1]{Miyazaki:1994-1}; such a knot, according to Hedden, Kirk and Livingston \cite{Hedden-Kirk-Livingston:2012-1}, is not even topologically slice.  Our paper will center around Miyazaki's another example of algebraically slice, non-homotopy-ribbon fibered knots, whose construction is based on the following specific family of knots that we will refer to as \emph{Miyazaki knots} for the rest of the paper.
\begin{definition}\label{definition:Miyazaki}
A Miyazaki knot $K$ is a fibered, $-$amphicheiral knot with irreducible Alexander polynomial $\Delta_K(t)$.
\end{definition}

\begin{proposition}[Example 2, \cite{Miyazaki:1994-1}]\label{example:Miyazaki}

For a Miyazaki knot $K$, the cable knot $K_{2n,1}$ is algebraically slice but not homotopy ribbon.
\end{proposition}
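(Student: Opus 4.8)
The plan is to establish the two assertions separately after recording the fibered structure of the cable. Because $K$ is fibered and the cabling slope $(2n,1)$ has nonzero longitudinal winding, $K_{2n,1}$ is again fibered; its fiber $\hat F$ is built from $2n$ parallel copies of a fiber $F$ of $K$ together with the fiber of the pattern torus knot $T_{2n,1}$, and its monodromy $\hat h$ is assembled from the companion monodromy of $K$. Multiplicativity of the Alexander polynomial under cabling gives $\Delta_{K_{2n,1}}(t)\doteq \Delta_{T_{2n,1}}(t)\,\Delta_K(t^{2n})=\Delta_K(t^{2n})$, as $T_{2n,1}$ is the unknot. It thus suffices to prove (i) the Seifert form of $K_{2n,1}$ is metabolic, and (ii) the monodromy $\hat h$ does not extend over a handlebody bounded by $\hat F$.

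For (i) I would invoke Litherland's formula for the Seifert form of a cable, which presents its $S$-equivalence class as the sum of the Seifert form of the pattern torus knot $T_{2n,1}$ and a companion form whose Tristram--Levine signature function is $\omega\mapsto\sigma_K(\omega^{2n})$. The torus summand is trivial since $T_{2n,1}$ is unknotted. For the companion summand I would use $(-)$-amphichirality: mirroring carries a Seifert matrix $V$ of $K$ to $-V^{T}$ and reversal transposes it, so $V$ is $S$-equivalent to $-V$; hence $[K]$ is $2$-torsion in Levine's algebraic concordance group and, in particular, $\sigma_K\equiv 0$, so $\sigma_K(\omega^{2n})\equiv 0$. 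Feeding $V\sim -V$ and the even winding number $2n$ into Litherland's explicit companion form, one checks that this form is metabolic, whence $K_{2n,1}$ is algebraically slice.

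For (ii) I would apply the Casson--Gordon criterion quoted above: as $K_{2n,1}$ is fibered, it is homotopy ribbon if and only if $\hat h$ extends over a handlebody $W$ with $\partial W=\hat F$. Such an extension forces a half-dimensional subspace $L=\ker\!\bigl(H_1(\hat F;\Q)\to H_1(W;\Q)\bigr)$ that is Lagrangian for the intersection form and invariant under $\hat h_{*}$; equivalently, a $\Q[t^{\pm1}]$-submodule of the Alexander module of $K_{2n,1}$ that is a metabolizer for the Blanchfield pairing. I would show that no such \emph{invariant} metabolizer exists. The module is governed by $\Delta_K(t^{2n})$, and I would analyze its $\Z[t^{\pm1}]$-structure through the irreducibility of $\Delta_K$: the roots of $\Delta_K(t^{2n})$ are the $2n$-th roots of the Galois-transitive, reciprocally paired roots of $\Delta_K$, and for even $2n$ the reciprocal involution and the $2n$-th-root branching interact so that no proper $\hat h_{*}$-invariant submodule can meet each reciprocal pair exactly once. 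Since an invariant metabolizer would provide exactly such a submodule, none can exist, and $K_{2n,1}$ is not homotopy ribbon.

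The crux, and the step I expect to be the main obstacle, is the separation of the two metabolizer conditions in (i) and (ii). Algebraic sliceness needs only \emph{some} metabolizer of the Seifert form, which may be scrambled by the monodromy, whereas the Casson--Gordon criterion demands one that is simultaneously $\hat h_{*}$-invariant; the irreducibility of $\Delta_K$ together with the even parameter $2n$ is precisely what destroys the invariant metabolizer while leaving a non-invariant one. Making this rigorous requires the module-theoretic bookkeeping to be carried out over $\Z[t^{\pm1}]$ rather than $\Q[t^{\pm1}]$, and a careful verification that the invariant-Lagrangian reformulation of the handlebody-extension condition is exactly the structure obstructed by the Galois/reciprocity analysis.
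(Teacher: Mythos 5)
First, a remark on the comparison: the paper offers no proof of this proposition at all --- it is quoted directly from Miyazaki's Example~2 --- so your proposal must stand on its own against Miyazaki's original argument.

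The decisive problem is in your part (ii). You propose to violate the Casson--Gordon handlebody-extension criterion by showing that the Alexander module of $K_{2n,1}$ admits no $\hat h_*$-invariant metabolizer, via the Galois action on the roots of $\Delta_K(t^{2n})$. But such an invariant metabolizer \emph{does} exist, and your own part (i) manufactures one: for a fibered knot the deck transformation acts on $H_1(\hat F;\Q)$ exactly by $\hat h_*$, a Blanchfield metabolizer is by definition a $\Q[t^{\pm1}]$-submodule and hence automatically $\hat h_*$-invariant, and algebraic sliceness is equivalent to the existence of such a submodule. Concretely, for $K=4_1$ and $n=1$ one has $\Delta_K(t^2)=(t^2-t-1)(t^2+t-1)$, a product of an irreducible polynomial with its reciprocal, and the submodule generated by one factor is an invariant metabolizer meeting each reciprocal pair of roots exactly once --- the very configuration you assert cannot occur. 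So no purely module-theoretic obstruction of the kind you describe can exist; the handlebody condition is strictly stronger than the existence of an invariant homological Lagrangian (it requires the Lagrangian to be realized by an actual homeomorphism of a handlebody), and Miyazaki's proof of non-homotopy-ribbonness (his Theorem~8.6, of which Example~2 is a special case) is a geometric analysis of how a hypothetical extension of the monodromy over a handlebody must interact with the incompressible torus coming from the satellite structure of the fiber complement. Your closing paragraph correctly senses the tension between (i) and (ii) but resolves it the wrong way: the true dichotomy is not ``some metabolizer versus an invariant one'' but ``an invariant homological metabolizer versus a geometric handlebody extension.''

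Part (i) is in the right spirit but also leaves its substantive step unproved. From $V\sim -V$ you get only that $[K]$ is $2$-torsion in Levine's algebraic concordance group and that $\sigma_K\equiv 0$; since that group contains $\Z_2^\infty\oplus\Z_4^\infty$ torsion invisible to signatures, the sentence ``one checks that this form is metabolic'' is precisely the content of the claim and cannot be discharged by signature vanishing. The claim is true --- for instance via the Hartley--Kawauchi factorization $\Delta_K(t^2)=F(t)F(-t)$ (up to units) for amphicheiral knots, which shows $\Delta_K(t^{2n})$ has no self-reciprocal irreducible factors so the Witt class of the companion form vanishes --- but as written this half of the argument is incomplete, and the other half fails.
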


In view of the slice-ribbon conjecture, one asks the following open question.
\begin{question}[Kawauchi] Is $K_{2n,1}$ slice when $K$ is Miyazaki?  In particular, is $(4_1)_{2,1}$ a slice knot, where $4_1$ denotes the figure-eight knot?\footnote{Note that $4_1$ is a Miyazaki knot. The special case of $(4_1)_{2,1}$ is originally due to Kawauchi \cite{Kawauchi:1980-2}.}
\end{question}
Note that if a certain $K_{2n,1}$ is slice, then this knot will be a counterexample to the slice-ribbon conjecture.  To this end, significant progress has been made on establishing the sliceness of those knots in the following weaker sense.

\begin{definition}\label{Def:RS}For a subring $R\subset \Q$, a knot $K\subset S^3$ is called \emph{$R$-slice} if there exists an embedded disk $D$ in an $R$-homology 4-ball $V$ such that $\partial(V,D)=(S^3, K)$. When $R=\Q$, we say $K$ is \emph{rationally slice}. We are mainly interested in the case that $R=\Q$ or $R=\Z[\frac{1}{n}]$, the subring of $\Q$ generated by $\frac{1}{n}$.
\end{definition}
\begin{remark}Our definition of rationally slice knots is weaker than the one used in \cite{Kawauchi:2009-1} by Kawauchi.  To avoid confusion, we call a knot $K$ in $S^3$ strongly rationally slice if $(S^3,K)=\partial (V,D)$ where $D$ is a smoothly embedded disk in a $\Q$-homology 4-ball $V$ such that the following inclusion induced map is an isomorphism:
\[H_1(S^3-\nu(K);\Z)/\textrm{torsion}\xrightarrow{\cong} H_1(V-\nu(D);\Z)/\textrm{torsion}.\]
We will not use this definition but we remark that if a knot $K$ in $S^3$ is strongly rationally slice, then $K$ is algebraically slice. By \cite[Theorem 4.16]{Cha:2007-1}, $4_1$ is rationally slice in our sense but $4_1$ is \emph{not} strongly rationally slice since $4_1$ is not algebraically slice.
\end{remark}

In \cite{Kawauchi:1979-1,Kawauchi:2009-1}, Kawauchi showed that the Miyazaki knot $K$ is rationally slice if $K$ is hyperbolic (see Lemma \ref{lemma:Kawauchi}).  Our first main theorem extends this result to all Miyazaki knots.

\begin{theorem}\label{main1}
If $K$ is Miyazaki, then $K$ is $\Z[\frac{1}{2}]$-slice. In particular, $K$ is rationally slice.
\end{theorem}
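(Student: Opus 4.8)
The plan is to factor Theorem~\ref{main1} through a reflective symmetry together with a single quotient construction. Concretely, I would first upgrade the $-$amphicheirality to a smooth orientation-reversing involution $\tau$ of $S^3$ with $\tau(K)=K$, reversing the orientation of $K$, whose fixed-point set $P=\operatorname{Fix}(\tau)$ is a $2$-sphere meeting $K$ transversally in exactly two points (so that $\tau$ looks like $\operatorname{diag}(-1,1,1)$ along $K$). Granting such a $\tau$, I would then show by a general argument that $K$ is $\Z[\frac{1}{2}]$-slice. Kawauchi's Lemma~\ref{lemma:Kawauchi} settles the hyperbolic case, so the genuinely new input is the production of $\tau$ for the remaining Miyazaki knots.

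For the construction, set $\sigma(x,t)=(\tau(x),-t)$ on $S^3\times[-1,1]$ and let $V=(S^3\times[-1,1])/\sigma$. Since $\tau$ reverses orientation on $S^3$, the map $\sigma$ is an orientation-preserving involution, and its fixed-point set $P\times\{0\}$ has codimension two, so $V$ is a smooth $4$-manifold; the two boundary spheres are glued by $\tau$, giving $\partial V\cong S^3$. The $\tau$-invariant annulus $K\times[-1,1]$ descends to a smoothly embedded disk $\bar D\subset V$ with $\partial(V,\bar D)=(S^3,K)$, the two fixed points of $\tau|_K$ becoming the two transverse intersections of $\bar D$ with the branch sphere. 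Finally $V$ is a $\Z[\frac{1}{2}]$-homology ball: as $2$ is invertible the transfer gives $H_*(V;\Z[\tfrac{1}{2}])\cong H_*(S^3\times[-1,1];\Z[\tfrac{1}{2}])^{\sigma}$, and since $\tau$ reverses the orientation of the $S^3$ factor, $\sigma$ acts by $-1$ on $H_3$, leaving only $H_0=\Z[\tfrac{1}{2}]$. (Over $\Z$ the branched cover contributes $H_1(V;\Z)=\Z/2$, which is exactly why one obtains $\Z[\frac{1}{2}]$ rather than $\Z$.) Thus $K$ is $\Z[\frac{1}{2}]$-slice and, a fortiori, rationally slice in the sense of Definition~\ref{Def:RS}.

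The main obstacle is producing $\tau$, with the correct $2$-sphere fixed set, for the non-hyperbolic Miyazaki knots, where Mostow rigidity is unavailable. Here I would use the hypotheses head-on. Irreducibility of $\Delta_K$ forbids connected sums, forbids satellites of nonzero winding number (all of which factor the Alexander polynomial), and rules out torus knots, so every Miyazaki knot is either hyperbolic or a winding-number-zero satellite; the latter is the case to be handled. The fibered hypothesis is what lets me find $\tau$: by Thurston's uniqueness of the fibration the given $-$amphicheiral diffeomorphism preserves the open book up to isotopy, so it is recorded by a symmetry of the monodromy $h$ on the fiber $F$, and I would promote this mapping-class relation to an honest finite group action by equivariant geometrization together with Nielsen realization, then reassemble $\tau$ from the fiberwise involution and check that its fixed locus sweeps out the required $2$-sphere transverse to $K$. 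Irreducibility of $\Delta_K$ also rigidifies this step: the monodromy acts on $H_1(F;\Q)$ with irreducible characteristic polynomial $\Delta_K$, so its commutant is the field $\Q[t]/(\Delta_K)$ and any commuting involution must act as $\pm\id$ on homology, which is the homological signature of a reflective (rather than freely acting) symmetry.

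The delicate point, and the step I expect to be hardest, is ensuring $\operatorname{Fix}(\tau)$ is a $2$-sphere rather than a pair of isolated points: the isolated-point model $\operatorname{diag}(-1,-1,-1)$ would make $V$ singular at two cone-on-$\mathbb{RP}^3$ points, and since $\mathbb{RP}^3$ has $|H_1|=2$ it bounds no rational homology ball, so those singularities could not be resolved and the construction would break down. Establishing that the fibered, $-$amphicheiral, irreducible-$\Delta_K$ hypotheses force the reflective model, uniformly over hyperbolic and winding-number-zero-satellite Miyazaki knots, is therefore the crux on which Theorem~\ref{main1} turns.
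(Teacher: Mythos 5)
There is a genuine gap, and it begins with your case analysis. You claim that irreducibility of $\Delta_K$ ``forbids satellites of nonzero winding number,'' leaving winding-number-zero satellites as the case to handle. This is backwards on both counts. The cabling formula reads $\Delta_{P(J)}(t)=\Delta_P(t)\Delta_J(t^w)$, and irreducibility only forces $\Delta_P(t)=1$ (i.e.\ $P$ is an unknotted pattern); it does not force $w=0$, since $\Delta_J(t^w)$ can itself be irreducible --- the paper's Proposition \ref{SatelliteMiyazaki} exhibits Miyazaki satellites with winding number $2n+1\geq 5$. Conversely, by Hirasawa--Murasugi--Silver a \emph{fibered} satellite has a fibered pattern of \emph{nonzero} winding number, so the case you propose to treat is empty while the case that actually occurs (unknotted fibered pattern, $|w|\geq 2$) is untouched by your method. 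Your fallback for that case --- upgrading $-$amphicheirality to an honest involution via ``equivariant geometrization together with Nielsen realization'' --- is not an argument; indeed the paper explicitly leaves it as an open question whether satellite Miyazaki knots are strongly $-$amphicheiral, and only verifies it in special cases. (Also, the monodromy $h$ is conjugated to $h^{-1}$ by an orientation-reversing symmetry rather than centralized by it, so the commutant-is-a-field observation does not constrain the symmetry the way you suggest.)

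Your ``delicate point'' about needing $\operatorname{Fix}(\tau)\cong S^2$ is a red herring created by your choice of 4-manifold. For strongly $-$amphicheiral knots the involution typically has $\operatorname{Fix}(\tau)=S^0$ (this is the case for the figure-eight knot), and Kawauchi's construction, which the paper follows in Lemma \ref{lemma:Kawauchi}(2), avoids the cone-on-$\mathbb{RP}^3$ singularities entirely by first passing to the $0$-surgery manifold $M$, where the extended involution $\tau_M$ is free; the twisted $I$-bundle $Z$ over $M/\tau_M$ is then a smooth 4-manifold with $\partial Z=M$ and $H_*(Z;\Z[\frac{1}{2}])\cong H_*(S^1;\Z[\frac{1}{2}])$, which suffices by Cochran--Franklin--Hedden--Horn. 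More importantly, the paper never needs a global involution for satellite Miyazaki knots at all: it proves (Lemma \ref{lemma:induction}) that a satellite Miyazaki knot $K=P(J)$ has an unknotted pattern $P$ and a Miyazaki companion $J$ with $g(J)<g(K)$, and then runs an induction on genus --- once $J$ bounds a disk $\Delta$ in a $\Z[\frac{1}{2}]$-homology ball, the unknotted pattern $P$ bounds a disk inside the tubular neighborhood $\nu(\Delta)\cong D^2\times D^2$, giving a slice disk for $K$ in the same 4-manifold. That reduction to the hyperbolic (ultimately genus-one, figure-eight) base case is the idea your proposal is missing.
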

\begin{remark}\label{remark:generalization}We compare Theorem \ref{main1} with work of Cochran, Davis, and Ray \cite{Cochran-Davis-Ray:2014-1} which generalizes work of Ray \cite{Ray:2013-1}. By \cite[Corollary 5.3]{Cochran-Davis-Ray:2014-1}, if $K_{2n,1}$ is slice, then $K$ is $\Z[\frac{1}{2n}]$-slice. In particular, if there is a counterexample to the slice-ribbon conjecture given by $K_{2n,1}$ for some Miyazaki knot $K$, then $K$ will be $\Z[\frac{1}{2n}]$-slice. Therefore, Theorem \ref{main1} can be viewed as supporting evidence of the existence of such a counterexample.  (A universal coefficient theorem argument gives that $\Z[\frac{1}{2}]$-slice implies $\Z[\frac{1}{2n}]$-slice.)

We remark that this cannot be improved to the $\Z$-sliceness. Indeed, it is easy to see that no Miyazaki knot is $\Z$-slice. It is known that every $\Z$-slice knot is algebraically slice and hence its Alexander polynomial is reducible. However, the Alexander polynomial of a Miyazaki knot is irreducible. We also remark that the special case of Theorem \ref{main1}  when $K=4_1$ is proved implicitly in \cite[Theorem 4.16]{Cha:2007-1} using a Kirby calculus argument.
\end{remark}

On the contrary, one may attempt to prove the non-sliceness of cables of Miyazaki knots from the perspective that slice-ribbon conjecture could be true.  This leads us to look at their concordance invariants coming from Heegaard Floer theory. We would be primarily working with $\nu^+$ invariant \cite{Hom-Wu:2014-1}, as it gives the strongest obstruction on sliceness among several closely related invariants, including $\tau$, $\epsilon$, and $\Upsilon(t)$ invariants.  For a detailed description and comparison of concordance invariants from Heegaard Floer theory, we refer the readers to Hom's survey article \cite{Hom:2015-2}.  For the purpose of the present paper, remember that a slice knot $K$ has $\nu^+(K)=0$.  The cabling formula from \cite{Wu:2015-1} implies that $\nu^+(K_{2n,1})=0$ if and only if $\nu^+(K)=0$.  Hence, provided that we can find a Miyazaki knot $K$ with non-vanishing $\nu^+$ invariant, the knots $K_{2n,1}$ would be non-slice since they also have non-vanishing $\nu^+$ invariants.  This would give a negative answer to Kawauchi's question.

Also inspired by recent work of Cha and the first author \cite{Cha-Kim:2015-1}, we like to determine whether the collection of $\nu^+$ invariants of satellites detects slice knots.  More precisely, if $\nu^+(P(K))=\nu^+(P(U))$ holds for all satellite patterns $P\subset S^1\times D^2$, does it imply that $K$ is slice?  Our second main theorem addresses this question.

\begin{theorem}\label{main2}
If $K$ is a $\Q$-homology \textup{0}-bipolar knot, then
 \begin{enumerate}
\item $\nu^+(P(K)\#-P(U))=\nu^+(-P(K)\#P(U))=0$ for all patterns $P$.
\item $\nu^+(P(K))=\nu^+(P(U))$ for all patterns $P$.
\end{enumerate}
\end{theorem}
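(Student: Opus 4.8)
The plan is to combine a geometric satellite construction with a single Heegaard Floer input: that $\nu^+$ is monotone under negative-definite $\Q$-homology concordance. Throughout I use that $K$ being $\Q$-homology $0$-bipolar provides a smoothly embedded disk $\Delta_-$ in a negative-definite $\Q$-homology $4$-manifold $V_-$ and a disk $\Delta_+$ in a positive-definite $\Q$-homology $4$-manifold $V_+$, each with $\partial V_\pm=S^3$, $H_1(V_\pm;\Q)=0$, and $\partial\Delta_\pm=K$. Since $\nu^+\ge 0$ always, $\nu^+$ is subadditive under connected sum, and $\nu^+$ vanishes on slice knots, the whole statement reduces to producing suitable definite concordances from $P(K)$ to $P(U)$ and feeding them into the monotonicity statement.

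\textbf{Geometric step.} First I would show: if $K$ bounds a disk in a negative-definite $\Q$-homology $4$-manifold $V_-$, then for every pattern $P$ the satellites $P(K)$ and $P(U)$ cobound a smoothly embedded annulus in a negative-definite $\Q$-homology cobordism $X_-$ from $S^3$ to $S^3$ with $H_1(X_-;\Q)=0$. To build $X_-$, remove from $V_-$ a standard ball neighborhood $(D^4,D^2)$ of an interior point of $\Delta_-$; what remains is a cobordism from $(S^3,K)$ to $(S^3,U)$ carrying the annulus $\Delta_-\setminus D^2$, and this operation changes neither the intersection form nor $H_*(\,\cdot\,;\Q)$. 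Performing the satellite operation with pattern $P$ along this annulus produces $X_-$ together with an annulus $A_P$ from $P(K)$ to $P(U)$; since the infection is supported in a neighborhood $A_P\times D^2$, it again preserves negative-definiteness and rational acyclicity. Because $\Delta_-$ is a disk, the resulting cobordism surface is an annulus; this is exactly where the hypothesis that the bipolar surfaces have genus $0$ is essential, as a positive-genus filling would instead produce a higher-genus cobordism. Mirroring, $V_+$ gives a positive-definite such concordance, equivalently (after reversing orientation) a negative-definite concordance from $P(U)$ to $P(K)$.

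\textbf{Floer step and main obstacle.} The crux, which I expect to be the hard part, is the following monotonicity: if $J_0,J_1\subset S^3$ cobound an annulus in a negative-definite $\Q$-homology cobordism with $H_1=0$ over $\Q$, then $\nu^+(J_0)\le\nu^+(J_1)$. I would prove this by surgering along the annulus to obtain, for large $N$, a negative-definite cobordism between the rational homology spheres $S^3_N(J_0)$ and $S^3_N(J_1)$, and then applying the Ozsv\'ath--Szab\'o $d$-invariant inequality for negative-definite cobordisms; combined with the large-surgery formula $d(S^3_N(J),[0])=\tfrac{N-1}{4}-2V_0(J)$ and the identity $\nu^+(J)=0\iff V_0(J)=0$, this yields $V_0(J_0)\le V_0(J_1)$ and hence the claim. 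The subtle points are tracking the $\mathrm{spin}^c$ structure $[0]$ through the surgery and cobordism and controlling the correction terms with rational rather than integral coefficients; a sanity check on the two trefoils fixes the direction of the inequality.

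\textbf{Conclusion.} Given the two steps, part (1) follows quickly. Connect-summing the negative-definite concordance $X_-$ from $P(K)$ to $P(U)$ with the product concordance carrying $-P(U)$ gives a negative-definite $\Q$-homology concordance from $P(K)\#-P(U)$ to the slice knot $P(U)\#-P(U)$; monotonicity together with $\nu^+\ge 0$ forces $\nu^+(P(K)\#-P(U))=0$. Applying the same argument to the mirror, using $V_+$ and the identity $-P(K)=\bar P(-K)$, gives $\nu^+(-P(K)\#P(U))=0$. For part (2), the negative-definite concordance from $P(K)$ to $P(U)$ gives $\nu^+(P(K))\le\nu^+(P(U))$, while reversing the positive-definite concordance gives $\nu^+(P(U))\le\nu^+(P(K))$, so $\nu^+(P(K))=\nu^+(P(U))$; alternatively, (2) is immediate from (1) and subadditivity of $\nu^+$.
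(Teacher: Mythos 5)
Your proposal reaches the right conclusion, but it does so by re-deriving from scratch the two results that the paper simply quotes, and in the process it states a key lemma in a form that is false. The paper's proof of (1) is: $P\#-P(U)$ is a slice pattern, so by (B2) the knot $P(K)\#-P(U)$ is again $\Q$-homology $0$-bipolar, and by (B3) such knots satisfy $V_0(K)=V_0(-K)=0$, i.e.\ $\nu^+=\nu^+(-K)=0$; part (2) then follows from subadditivity of $\nu^+$ applied to $P(U)=P(K)\#\bigl(-P(K)\#P(U)\bigr)$ up to concordance. Your geometric step is essentially the proof of (B2) rephrased in terms of concordances rather than slice disks, and your Floer step is (a generalization of) the proof of (B3), i.e.\ of Cha--Powell's Theorem 2.7. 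Two remarks on efficiency: for part (1) you only ever need the ``$V_0=0$'' version of your monotonicity claim, which is exactly (B3); and for part (2) your primary route requires monotonicity of the \emph{full} invariant $\nu^+$ (equivalently of every $V_k$) under definite concordance, which means tracking every spin$^c$ structure $[i]$ on the large surgery through the Elkies-type argument --- substantially more than what is in the cited literature. Your one-line fallback (part (2) follows from part (1) plus subadditivity) is the paper's actual argument and should be promoted to the main one.

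The genuine gap is that you omit condition (4) of Definition \ref{Def:HBP}, namely $[D_\pm,\partial D_\pm]=0$ in $H_2(V_\pm,S^3;\Q)$, both from your description of the bipolar data and from the hypotheses of your monotonicity lemma. Without it the lemma is false: resolving the double points of an immersed disk by blow-ups shows, for instance, that the right-handed trefoil bounds an embedded but non-null-homologous disk in the negative-definite manifold $(D^4\#\overline{\C P^2})$ with $H_1=0$ and boundary $S^3$, while its mirror bounds one in the positive-definite $(D^4\#\C P^2)$; since $\nu^+(T_{2,3})=1$, whichever orientation convention your inequality takes, one of these punctured disks would force a vanishing that does not hold. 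The rational null-homology of the annulus is precisely what allows you to find a spin$^c$ structure on the surgered cobordism restricting to $[0]$ on both ends with $c_1^2+b_2\ge 0$, which is the entire content of the $d$-invariant step. The gap is repairable in your application --- the annulus $\Delta_\pm\setminus D^2$ is rationally null-homologous by hypothesis, and its satellite represents $w\cdot[\Delta_\pm\setminus D^2]=0$, where $w$ is the winding number of $P$ --- but this hypothesis is doing all the work and must be carried through every step of the construction.
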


Here, the $\Q$-homology 0-bipolar knot is a $\Q$-homology analog of 0-bipolar knots whose precise definition will be given in Section 3.  As noted in \cite{Cha-Kim:2015-1}, there are $\Q$-homology 0-bipolar knots which generate a subgroup isomorphic to $\Z^\infty \oplus \Z_2^\infty$ in either the smooth or topological knot concordance groups.  In particular, all rationally slice knots are $\Q$-homology 0-bipolar, so their sliceness cannot be detected by the $\nu^+$ invariant by Theorem \ref{main2}.  Observe also that Theorem \ref{main1} says Miyazaki knots are rationally slice.  It follows that Miyazaki knots and their $(2m, 1)$-cable all have vanishing $\nu^+$ invariants.


In \cite{Cha:2007-1}, the rational knot concordance group $\mathcal{C}_\Q$ is defined as the set of rational concordance classes of knots in rational homology 3-spheres. There is a natural, induced inclusion map $\mathcal{C}\to \mathcal{C}_\Q$ from the smooth concordance group to the rational concordance group. In \cite[Problem 1.11]{Usher:2011-1}, Cha asked about the structure of the kernel and the cokernel of the map $\mathcal{C}\to \mathcal{C}_\Q$. In \cite[Theorem 1.4]{Cha:2007-1}, it is shown that $\Ker(\mathcal{C}\to \mathcal{C}_\Q)$ contains a $\Z_2^\infty$-subgroup and $\Coker(\mathcal{C}\to \mathcal{C}_\Q)$ contains a $\Z^\infty\oplus \Z_2^\infty\oplus \Z_4^\infty$-direct summand. As far as the authors know, this is the only known fact about $\mathcal{C}\to \mathcal{C}_\Q$, and the existence of a $\Z^\infty$-subgroup in $\Ker(\mathcal{C}\to \mathcal{C}_\Q)$ is an intriguing open problem. As a corollary of our main theorem, we resolve this problem modulo the slice-ribbon conjecture.
\begin{corollary}\label{corollary:slice-ribbon}If the slice-ribbon conjecture is true, then $\{K_{2n,1}\}_{n=1}^\infty$ generate a $\Z^\infty$-subgroup of $\Ker(\mathcal{C}\to \mathcal{C}_\Q)$ for any Miyazaki knot $K$.
\end{corollary}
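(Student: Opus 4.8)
The plan is to first verify that each $K_{2n,1}$ lies in $\Ker(\mathcal{C}\to\mathcal{C}_\Q)$, and then to establish $\Z$-linear independence of the family $\{K_{2n,1}\}$ using the slice--ribbon hypothesis. For membership, recall from Theorem \ref{main1} that a Miyazaki knot $K$ is rationally slice, i.e.\ rationally concordant to the unknot $U$. The satellite operation descends to rational concordance classes, and the $(2n,1)$-cabling pattern $P$ satisfies $P(U)=T_{2n,1}=U$ since $\gcd(2n,1)=1$. Applying $P$ to a rational concordance from $K$ to $U$ therefore produces a rational concordance from $K_{2n,1}=P(K)$ to $U$, so $K_{2n,1}$ is rationally slice and $[K_{2n,1}]\in\Ker(\mathcal{C}\to\mathcal{C}_\Q)$.

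For independence, suppose toward a contradiction that a nontrivial relation $\sum_{i=1}^{m}a_i[K_{2n_i,1}]=0$ holds in $\mathcal{C}$, with $n_1<\dots<n_m$ and not all $a_i=0$. Equivalently, the connected sum $J:=\#_{i=1}^m\,a_i\,K_{2n_i,1}$ (using $|a_i|$ copies of $K_{2n_i,1}$ or of its reverse mirror $-K_{2n_i,1}$ according to $\sgn(a_i)$) is smoothly slice. Each $K_{2n_i,1}$ is fibered, being a cable of the fibered knot $K$, and connected sums of fibered knots are fibered; hence $J$ is fibered. Granting the slice--ribbon conjecture, the slice knot $J$ is ribbon, hence homotopy ribbon, so by the fibered criterion \cite[Theorem 5.1]{Casson-Gordon:1983-1} its monodromy $\phi$ extends over a handlebody $H$ with $\partial H=F$, the fiber of $J$. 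It therefore suffices to prove that $\phi$ does \emph{not} extend whenever some $a_i\neq 0$.

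My approach to this would be to exploit the irreducibility of $\Delta_K$. For fibered knots the characteristic polynomial of $\phi_*$ on $H_1(F;\Q)$ is the Alexander polynomial, and the cabling formula gives $\Delta_{K_{2n_i,1}}(t)=\Delta_K(t^{2n_i})$. Thus $H_1(F;\Q)$ carries a block decomposition indexed by $i$ (and by the $|a_i|$ copies), with $\phi_*$ acting on the $i$-th block with characteristic polynomial $\Delta_K(t^{2n_i})$. Since $\Delta_K$ is irreducible, the factors $\Delta_K(t^{2n_i})$ for distinct $i$ have disjoint root sets (after ruling out accidental coincidences), so the intersection form block-diagonalizes and any $\phi_*$-invariant Lagrangian coming from $H$ splits along the index-$i$ blocks. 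One would then restrict the handlebody $H$ to each block so as to reduce the extension problem to that for a single cable $K_{2n_i,1}$, where Miyazaki's proof of Proposition \ref{example:Miyazaki} already shows the monodromy does not extend.

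The hardest step is carrying out this reduction honestly at the level of handlebodies rather than homology. The monodromy-extension obstruction is strictly finer than the homological condition that $\phi_*$ preserve a Lagrangian, since a single cable, being algebraically slice, does admit invariant metabolizers; so the contradiction cannot be extracted from linear algebra alone and one must genuinely track the handlebody through the connected-sum decomposition. Two further points require care: first, ruling out accidental equalities among the $2n_i$-th roots of the roots of $\Delta_K$ for distinct $i$, which should follow from irreducibility together with the $-$amphicheirality of $K$; and second, the appearance within a fixed index $i$ of $|a_i|$ copies of the \emph{same} block with the \emph{same} sign, which a priori could support a non-split ``diagonal'' invariant metabolizer even when a single copy does not. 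Excluding the latter amounts to a norm condition over the field $\Q[t]/\bigl(\Delta_K(t^{2n_i})\bigr)$ equipped with its $t\mapsto t^{-1}$ involution, and verifying it is, I expect, the crux of the argument.
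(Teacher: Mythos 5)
Your first half --- membership of each $K_{2n,1}$ in $\Ker(\mathcal{C}\to\mathcal{C}_\Q)$ --- is correct and matches the paper: since the $(2n,1)$-cabling pattern is unknotted, a slice disk for $K$ in a $\Q$-homology ball yields one for $K_{2n,1}$ exactly as in the proof of Theorem \ref{main1} (and in property (B2)).

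The independence half, however, has a genuine gap, and you have in fact put your finger on it yourself. Under the slice--ribbon hypothesis you correctly reduce to showing that the monodromy of the fibered knot $\#_i a_i K_{2n_i,1}$ does not extend over a handlebody, but your proposed reduction is purely homological, and you concede that the extension obstruction is strictly finer than the existence of a $\phi_*$-invariant Lagrangian. That concession is fatal to the plan as stated: each $K_{2n,1}$ is \emph{algebraically slice} (Proposition \ref{example:Miyazaki}), so invariant metabolizers exist in every block and no ``norm condition over $\Q[t]/(\Delta_K(t^{2n_i}))$'' can produce the contradiction --- the homological obstruction you hope to extract is identically zero. The paper does not attempt this; it invokes Miyazaki's Theorem 8.6 (a structure theorem for homotopy ribbon connected sums of knots in his class $\mathcal{K}$, which forces the summands to cancel in pairs $K_{2j-1}=-K_{2j}$), packaged here as Lemma \ref{lemma:Miyazaki}(2). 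Since the genera $g(K_{2n,1})=2ng(K)$ separate distinct $n$, any pairing must occur within a fixed $n$, forcing each $a_n$ even and $K_{2k,1}$ $-$amphicheiral whenever $a_k\neq 0$; then $K_{2k,1}\#K_{2k,1}=K_{2k,1}\#-K_{2k,1}$ is ribbon, so by Miyazaki's Theorem 8.5.1 (Lemma \ref{lemma:Miyazaki}(1)) $K_{2k,1}$ itself would be homotopy ribbon, contradicting Proposition \ref{example:Miyazaki}. Note that this route runs \emph{through} the ``diagonal metabolizer'' scenario you flagged rather than excluding it by linear algebra: the forced amphicheirality is exactly what gets contradicted. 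To complete your argument you would either have to reprove Miyazaki's Theorem 8.6 at the level of handlebodies, or simply cite it; as written, the crux of the independence claim is unproved.
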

\begin{remark}If we consider, instead, $\mathcal{C}^\textrm{top}$ and $\mathcal{C}_\Q^\textrm{top}$, the topological knot concordance group and the topological rational knot concordance group, respectively, then the same proof works for an analogous statement: If the topologically slice-homotopy ribbon conjecture is true, then $\{K_{2n,1}\}_{n=1}^\infty$ generate a $\Z^\infty$-subgroup of $\Ker(\mathcal{C}^\textrm{top}\to \mathcal{C}_\Q^\textrm{top})$ for any Miyazaki knot $K$. For a survey and the state of the art for the topological slice-homotopy ribbon conjecture, see \cite{Cha-Powell:2016-1}.

It is interesting to compare Corollary \ref{corollary:slice-ribbon} with the recent result of Abe-Tagami \cite[Lemma~3.1]{Abe-Tagami:2015-1} which is based on the work of Baker \cite{Baker:2016-1}: If the slice-ribbon conjecture is true, then the set of tight, prime fibered knots in $S^3$ are linearly independent in $\mathcal{C}$.
\end{remark}

The remaining part of this paper is organized as follows.  In Section 2, we discuss the amphicheirality, or more generally, the symmetry of knots and links.  We will relate the symmetry of a satellite knot with the symmetry of its companion and pattern, and also relate the symmetry of a knot in a solid torus with the symmetry of its associated link.  These relationship will be crucial to the understanding of satellite Miyazaki knots.  In Section 3, we show that every Miyazaki knot is either hyperbolic or a satellite knot with companion that is also Miyazaki and of smaller genus.  Subsequently, we apply an inductive argument to prove the main result, Theorem \ref{main1}, on rational sliceness of Miyazaki knots.  We then exhibit an infinite family of satellite Miyazaki knots in Section 4, and make a digression to observe that all known examples are strongly $-$amphicheiral.  Finally, in Section 5, we take a different point of view and use Heegaard Floer homology to study Miyazaki knots.  We prove the vanishing result in Theorem \ref{main2}.  Discussion in this section will constitute a rather independent unit of the paper.

\vspace{5pt}\noindent{\bf Acknowledgements.} The authors thank Tetsuya Abe, Jae Choon Cha, and Stefan Friedl for helpful discussions and suggestions. The authors are particularly indebted to Jae Choon Cha for conversations that greatly inspired this collaboration.  Part of this work was done while the first author was
visiting The Chinese University of Hong Kong\@.  The first author thanks The Chinese University of Hong Kong for its generous
hospitality and support. The second author was partially supported by grants from the Research Grants Council of the Hong Kong Special Administrative Region, China (Project No.\ 24300714 and 14301215).

\section{Preliminaries on $-$amphicheiral knots}\label{section:amphicheiral}
In this section, we survey results of $-$amphicheiral knots that will be key ingredient for proving Theorem \ref{main1}. First, we define $-$amphicheiral knots and strongly $-$amphicheiral knots.
\begin{definition}A knot $K$ in $S^3$ is \emph{$-$amphicheiral} if there exists an orientation reversing homeomorphism $f\colon (S^3,K)\to (S^3,K)$ such that $f(K)$ is $K$ with the reversed orientation. A knot $K$ in $S^3$ is \emph{strongly $-$amphicheiral} if we can choose $f$ to be an involution.
\end{definition}

More generally, we say that a link $(S^3, L=K_1\sqcup\cdots \sqcup K_n)$ \emph{has symmetry} $(\alpha, \epsilon_1, \cdots ,\epsilon_n)$ if there exists a self-homeomorphism $f$ of $S^3$ of class $\alpha$ that restricts to a self-homeomorphism of each component $K_i$ of class $\epsilon_i$ for each $i$.  Here, $\alpha$ takes the value $\pm 1$ or $J_{\pm}$, which stands for orientation preserving/reversing homeomorphisms or involutions of $S^3$, respectively; and $\epsilon_i=\pm 1$ depending on whether $f|_{K_i}$ preserves or reverses homeomorphisms of $K_i$.  In particular, a knot is $(S^3, K)$ is $-$amphicheiral if it has symmetry $(-1, -1)$, and it is strongly $-$amphicheiral if it has symmetry $(J_-, -1)$. 

Every knot is either hyperbolic, a torus, or a satellite knot.  It is not hard to see that non-trivial torus knots are not $-$amphicheiral. By Lemma \ref{lemma:Kawauchi} below, the notions of $-$amphicheiral and strongly $-$amphicheiral are equivalent for hyperbolic knots.  Thus, we shall primarily focus on $-$amphicheiral satellite knots.  Let $K=P(J)$ denote a satellite knot $K$ with pattern $(S^1\times D^2, P)$ and companion $J$. It turns out that the symmetry of $K$ is almost completely determined by symmetries of $P$ and $J$.

Following the notation of Hartley \cite{Hartley:1980-2}, we say a knot in a solid torus $(S^1\times D^2, K)$ has symmetry $([\alpha, \epsilon_1], \epsilon_2)$ if there exists a self-homeomorphism of the solid torus of class $\alpha$ that maps the longitude class $[\lambda]$ to $\epsilon_1[\lambda]$ and restricts to a self-homeomorphism of $K$ of the class $\epsilon_2$.  As before, $\alpha$ takes the value $\pm 1$ or $J_{\pm}$ that stands for orientation preserving/reversing homeomorphisms or involutions of the solid torus $S^1\times D^2$, respectively; and $\epsilon_1$, $\epsilon_2$ take the value $\pm 1$.

We are now in a position to state the key lemma of \cite{Hartley:1980-2}, which relates the symmetry of a satellite knot $(S^3, P(J))$ with the symmetry of the companion $(S^3, J)$ and the pattern $(S^1\times D^2,P)$.

\begin{lemma}[{\cite[Theorem 4.1(1)]{Hartley:1980-2}}]\label{lemma:Hartley4.1} Suppose $P$ is a pattern and $J$ is a non-trivial prime knot and neither $J$ nor its mirror image is a companion of $P(U)$.  Let $\alpha=\pm 1$ or $J_\pm$ and $\epsilon=\pm 1$. Then, $(S^3, P(J))$ has symmetry $(\alpha, \epsilon)$ if and only if $(S^3, J)$ has symmetry $(\alpha, \epsilon_1)$ and $(S^1\times D^2, P)$ has symmetry $([\alpha, \epsilon_1], \epsilon)$ for some $\epsilon_1=\pm 1$.
\end{lemma}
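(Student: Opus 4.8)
The plan is to track how a symmetry interacts with the companion torus $T=\partial\nu(J)$, the incompressible torus realising the satellite structure of $P(J)$, with $P(J)$ lying inside the solid torus $\nu(J)$. The backward implication is the routine one. Given $g$ realising symmetry $(\alpha,\epsilon_1)$ of $(S^3,J)$ and $h$ realising symmetry $([\alpha,\epsilon_1],\epsilon)$ of $(S^1\times D^2,P)$, I would first isotope $g$ to preserve $\nu(J)$, hence also $T$ and the exterior $X_J=S^3\setminus\inte\,\nu(J)$. A linking-number computation gives that on $H_1(T)$ the map $g$ sends the Seifert longitude $\lambda$ to $\epsilon_1\lambda$ and the meridian $\mu$ to $\alpha\epsilon_1\mu$, which is exactly the boundary behaviour recorded by the class $[\alpha,\epsilon_1]$ of $h$. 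After adjusting $h$ by an isotopy so that it agrees with $g|_T$ under the satellite gluing, gluing $g|_{X_J}$ to $h$ along $T$ produces $f\colon S^3\to S^3$ of class $\alpha$ which, agreeing with $h$ on the solid torus, carries $P(J)$ to itself with class $\epsilon$; thus $(S^3,P(J))$ has symmetry $(\alpha,\epsilon)$.

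For the forward implication, starting from a symmetry $f$ of $(S^3,P(J))$ of class $(\alpha,\epsilon)$, the goal is to isotope $f$ so that $f(T)=T$. Since $J$ is nontrivial and $P$ is geometrically essential, $T$ is incompressible in the exterior of $P(J)$ and occurs in its Jaco--Shalen--Johannson torus decomposition; by uniqueness of that decomposition, $f$ is isotopic to a homeomorphism permuting the canonical family of tori. The delicate point is to show that $T$ is carried to a torus isotopic to $T$, rather than to one of the other decomposition tori---those arising either from further companions of $J$ inside $X_J$, or from the satellite structure of $P(U)$ inside $\nu(J)$. I would characterise $T$ as the unique decomposition torus whose side disjoint from $P(J)$ is homeomorphic to the exterior of the prime knot $J$; the hypothesis that neither $J$ nor $\bar J$ is a companion of $P(U)$ is exactly what guarantees this uniqueness, since $f$ may reverse orientation and could otherwise carry a torus presenting companion $J$ to one presenting companion $\bar J$.

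Once $f(T)=T$ is arranged, $f$ must preserve the side of $T$ containing the knot, namely the solid torus $\nu(J)$, so $f(\nu(J))=\nu(J)$ and $f(X_J)=X_J$, and $f$ restricts to each side. Filling $X_J$ with a standard solid torus promotes $f|_{X_J}$ to a self-homeomorphism of $(S^3,J)$ of class $\alpha$ acting on $J$ by some $\epsilon_1=\pm1$, giving $(S^3,J)$ the symmetry $(\alpha,\epsilon_1)$. Reading the induced action on $H_1(T)$ shows $f|_T$ sends $\lambda\mapsto\epsilon_1\lambda$ with this same $\epsilon_1$; since $f(P)=P$ with class $\epsilon$, the restriction $f|_{\nu(J)}$ exhibits the symmetry $([\alpha,\epsilon_1],\epsilon)$ of $(S^1\times D^2,P)$. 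The two occurrences of $\epsilon_1$ necessarily coincide because both record the action of the single map $f$ on the longitude of $T$.

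I expect the main obstacle to be the middle step: proving $f(T)$ is isotopic to $T$ and not to another incompressible torus of the decomposition. This is precisely where the primeness of $J$ and the non-companion hypothesis are forced into the statement, and making it rigorous requires the full uniqueness and naturality of the torus decomposition---equivalently Waldhausen's rigidity for Haken manifolds---rather than the mere existence of an incompressible companion torus.
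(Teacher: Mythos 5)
Your proposal follows essentially the same route as the paper, which only sketches this cited result of Hartley: isotope the symmetry of $(S^3,P(J))$ to preserve the companion torus $T=\partial\nu(J)$, then read off the two induced symmetries from the restrictions to $X_J$ and $\nu(J)$, with the converse obtained by gluing back along $T$. Your JSJ-uniqueness justification for why $f(T)$ can be taken to be $T$ (and your identification of where primeness and the non-companion hypothesis enter) is exactly the detail the paper defers to Hartley's original manuscript, so the two arguments agree in substance.
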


Here $U$ denotes the unknot, so $P(U)$ is a knot in $S^3$ that has the same pattern as $P$.  In particular, $P\subset S^1\times D^2$ is called an \emph{unknotted pattern} if $P(U)$ is an unknot, and the mild technical condition of the lemma is satisfied for all unknotted patterns.  To prove the lemma, we deform a given homeomorphism $f$ of $(S^3,K)$ of symmetry $(\alpha, \epsilon)$ to an isotopic homeomorphism $f'$ which fixes $J$ and a tubular neighborhood $V$ of it.  Suppose $f'$ maps $[\lambda]$ to $\epsilon_1[\lambda]$, then the induced self-homeomorphism $f'|_V$ has the symmetry $([\alpha, \epsilon_1], \epsilon)$ on $(V,K)\cong (S^1\times D^2, P)$.  Since $f'$ also fixes $J$ which is in the same class of the longitude $\lambda$, the self homeomorphism $f'$ realizes the symmetry $(\alpha, \epsilon_1)$ for $(S^3, J)$ . The converse of the lemma can be proved in a similar manner, and we refer the details to Hartley's original manuscript \cite{Hartley:1980-2}.

\begin{example}\label{example1}
As an application of Lemma \ref{lemma:Hartley4.1}, we prove that the satellite knot $K=P(J)$ is strongly $-$amphicheiral, where $J$ is the figure-eight knot and $P$ is the pattern as in Figure \ref{Livingston}.

\begin{figure}[h!]
\includegraphics[scale=0.4]{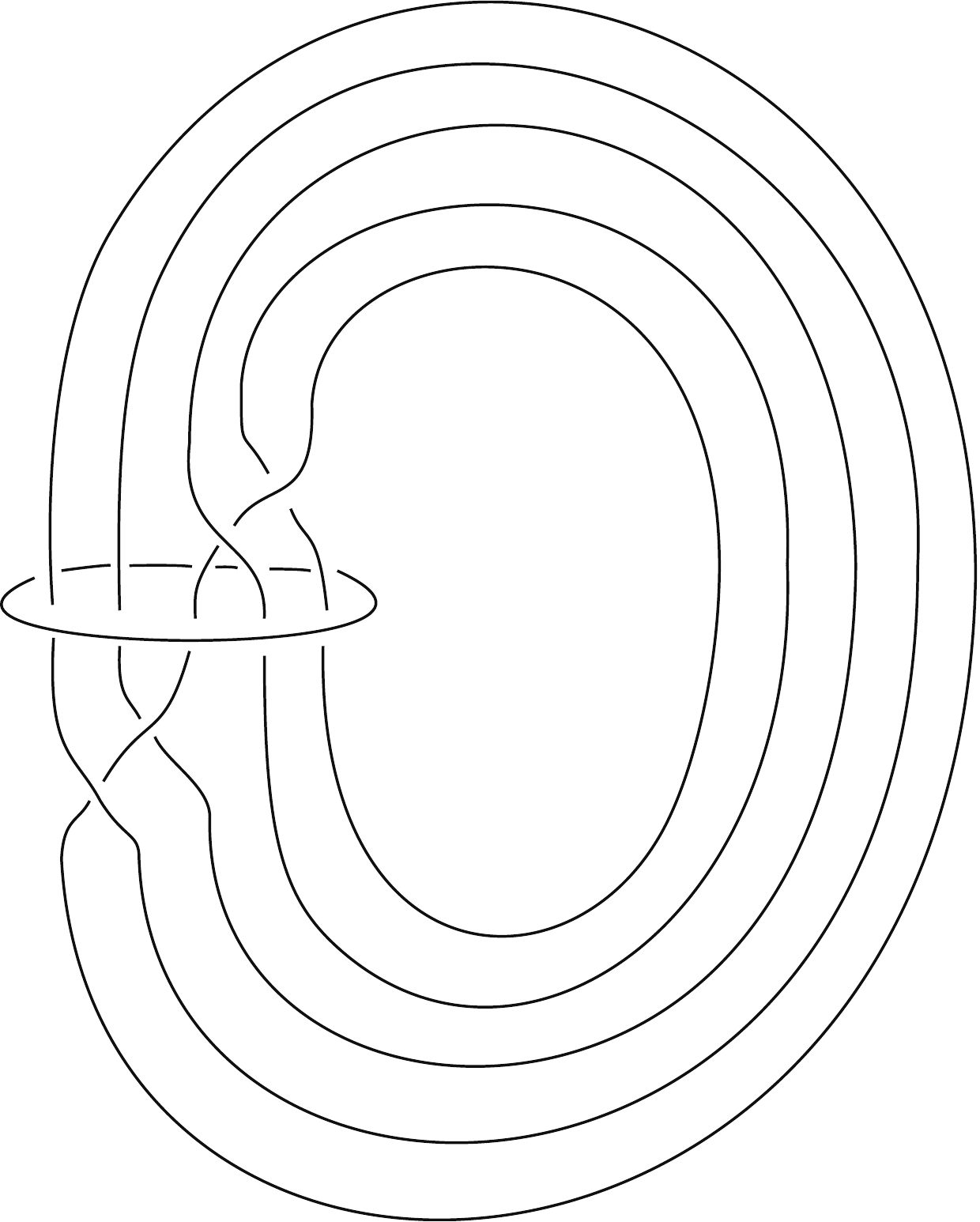}
\caption{$P$ is the closure of a braid of the form $\sigma_4\sigma_3^{-1}\sigma_2\sigma_1^{-1}$. It is an unknotted pattern and has symmetry $([J_-, -1], -1)$.  }
\label{Livingston}
\end{figure}

Since $(S^3, J)$ has symmetry $(J_-, -1)$, it suffices to show that $(S^1\times D^2, P)$ has symmetry $([J_-, -1], -1)$.  If we parameterize the meridional disk and the longitude of the solid torus by $z$ and $\theta$, respectively, then the map $f: (z, \theta) \rightarrow (-z, -\theta)$ is the desired orientation reversing involution of $(S^1\times D^2)$ that maps $P$ to an orientation reversing copy of itself. It follows from Lemma \ref{lemma:Hartley4.1} that $(S^3,K)$ has symmetry $(J_-, -1)$.
\end{example}

In \cite[Theorem 4.1(3)]{Hartley:1980-2}, Hartley relates the symmetry of a pattern $(S^1\times D^2,P)$ with the symmetry of \emph{the associated link} $(S^3,\mu_P,P(U))$ of $P$ where $\mu_P$ is the meridian of $P$.  Most relevant to our purpose is the following special case.

\begin{lemma}[{\cite[Theorem 4.1(3)]{Hartley:1980-2}}]\label{lemma:Hartley4.1.3} Suppose $P$ is a pattern and $\epsilon_i=\pm 1$ for $i=0,1,2$. Then, $(S^1\times D^2,P)$ has symmetry $([\epsilon_0,\epsilon_1],\epsilon_2)$ if and only if its associated link $(S^3,\mu_P,P(U))$ has symmetry $(\epsilon_0,\epsilon_0\epsilon_1,\epsilon_2)$. Also, $(S^1\times D^2, P)$ has symmetry $([J_-, \epsilon_1], \epsilon_2)$ if and only if $(S^3,\mu_P,P(U))$ has symmetry $(J_-,-\epsilon_1,\epsilon_2)$
\end{lemma}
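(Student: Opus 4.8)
The plan is to exploit the duality, under the standard embedding $N=S^1\times D^2\hookrightarrow S^3$, between $N$ and its complementary solid torus $N'=S^3\setminus\inte(N)$, viewing a single self-homeomorphism from each torus in turn. Under this embedding the meridian $\mu_P$ of $N$ is isotopic to the core of $N'$ and is parallel, on $\partial N=\partial N'$, to the meridian $\mu$ of $N$; dually, the longitude $\lambda$ of $N$ is the meridian of $N'$. The pattern $P$ becomes $P(U)\subset N\subset S^3$, and a symmetry of $(S^1\times D^2,P)$ versus a symmetry of the associated link $(S^3,\mu_P,P(U))$ are precisely the two records of one and the same map.

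For the forward direction I would take a self-homeomorphism $f$ of $N$ realizing $([\epsilon_0,\epsilon_1],\epsilon_2)$, so $f$ has orientation class $\epsilon_0$ on $N$, satisfies $f_*[\lambda]=\epsilon_1[\lambda]$, and restricts to $P$ with class $\epsilon_2$. Since $\mu$ is the unique compressible curve on $\partial N$, necessarily $f_*[\mu]=a[\mu]$ with $a=\pm1$, so on $H_1(\partial N)=\langle\mu,\lambda\rangle$ the map $f_*$ has determinant $a\epsilon_1$; as an orientation-preserving (resp.\ reversing) homeomorphism of $N$ restricts to one of $\partial N$, this determinant equals $\epsilon_0$, giving $a=\epsilon_0\epsilon_1$. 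Because $f|_{\partial N}$ sends $\lambda$ to $\pm\lambda$, which is the meridian of $N'$ up to sign, it extends across $N'$ to a homeomorphism $\hat f$ of $S^3$ of orientation class $\epsilon_0$; and since $\mu_P$ is isotopic to the core of $N'$, hence to $\mu$, the map $\hat f$ acts on $\mu_P$ with class $a=\epsilon_0\epsilon_1$. With $\hat f|_{P(U)}$ of class $\epsilon_2$, we conclude that $(S^3,\mu_P,P(U))$ has symmetry $(\epsilon_0,\epsilon_0\epsilon_1,\epsilon_2)$.

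For the converse, given a self-homeomorphism $h$ of $(S^3,\mu_P,P(U))$ of class $(\epsilon_0,\epsilon_0\epsilon_1,\epsilon_2)$, I would isotope $h$ to preserve a tubular neighborhood $N'$ of $\mu_P$ disjoint from $P(U)$, hence to preserve $N$ and its pattern $P$; running the same determinant computation on $N'$—whose core $\mu_P$ is acted on with class $\epsilon_0\epsilon_1$ and whose meridian is $\lambda$—recovers $h_*[\lambda]=\epsilon_1[\lambda]$, i.e.\ the symmetry $([\epsilon_0,\epsilon_1],\epsilon_2)$ of $(S^1\times D^2,P)$. The second assertion is the specialization $\epsilon_0=-1$ together with the requirement $\alpha=J_-$; then $\epsilon_0\epsilon_1=-\epsilon_1$ yields the claimed symmetry $(J_-,-\epsilon_1,\epsilon_2)$.

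The step demanding the most care is the involution case: extending an orientation-reversing homeomorphism across $N'$ is routine, but arranging $\hat f$ to again be an involution—and, dually, isotoping an involution of the link to preserve $N'$ through an involution-equivariant isotopy—is the delicate point, since a naive extension need not have order two. This equivariance is exactly what Hartley establishes, and I would cite \cite{Hartley:1980-2} for the full verification.
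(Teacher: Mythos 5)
The paper gives no proof of this lemma: it is quoted verbatim as a special case of Hartley's Theorem~4.1(3), so there is nothing internal to compare against. Your reconstruction is the standard (and, as far as one can tell, Hartley's own) argument via the genus-one Heegaard splitting of $S^3$: a symmetry of $(S^1\times D^2,P)$ and a symmetry of $(S^3,\mu_P,P(U))$ are two bookkeepings of one map, and the sign $\epsilon_0\epsilon_1$ on $\mu_P$ drops out of the determinant computation on $H_1(\partial N)$ exactly as you describe. Two small points worth making explicit: to extend $f$ over the complementary solid torus you must first normalize $f|_{\partial N}$ so that $f_*[\lambda]=\epsilon_1[\lambda]$ holds in $H_1(\partial N)$ and not merely in $H_1(N)$ (achievable by an isotopy of $N$, since meridional Dehn twists are isotopically trivial in the solid torus); and the involution case is the genuinely delicate step, which you correctly identify and appropriately defer to Hartley rather than claiming a naive extension has order two.
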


As observed by Hartley \cite[page 106]{Hartley:1980-1}, if a 2-component link $(S^3,K_1,K_2)$ has symmetry $(-1,\epsilon_1,\epsilon_2)$ with $\epsilon_1\epsilon_2=1$, then the linking number $\textrm{link}(K_1,K_2)=-\textrm{link}(K_1,K_2)=0$. This gives a simple criterion on symmetries of $(S^1\times D^2, P)$ with non-trivial winding number $w(P)$ as follows.

\begin{lemma}\label{remark:Hartley4.1.3}
Suppose $P$ is a pattern with non-zero winding number. If $(S^1\times D^2, P)$ has symmetry $([-1, \epsilon_1], \epsilon_2)$, then $\epsilon_1\epsilon_2=1$.
\end{lemma}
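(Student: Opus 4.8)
The plan is to reformulate the symmetry of the pattern as a symmetry of its \emph{associated link} via Lemma~\ref{lemma:Hartley4.1.3}, and then feed the result into Hartley's linking-number observation quoted just above the statement. The crucial point is that the linking number of the two components of the associated link $(S^3,\mu_P,P(U))$ records the winding number $w(P)$, which is nonzero by hypothesis. A symmetry forcing this linking number to vanish is therefore impossible, and tracking exactly when that would happen is what pins down $\epsilon_1\epsilon_2$.

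Concretely, I would first apply Lemma~\ref{lemma:Hartley4.1.3} with $\epsilon_0=-1$: the assumption that $(S^1\times D^2,P)$ has symmetry $([-1,\epsilon_1],\epsilon_2)$ is equivalent to the associated link $(S^3,\mu_P,P(U))$ having symmetry $(-1,-\epsilon_1,\epsilon_2)$, where the middle entry $-\epsilon_1=(-1)\epsilon_1$ records the action on $\mu_P$. Next I would identify the relevant linking number. Since $\mu_P$ is a meridian of the solid torus, the meridian disk $\{pt\}\times D^2$ is a Seifert surface for $\mu_P$, and the algebraic intersection of $P(U)=P$ with this disk is exactly the winding number. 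Hence $\textrm{link}(\mu_P,P(U))=\pm w(P)\neq 0$; only its non-vanishing will matter.

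Finally I would argue by contradiction. Suppose $\epsilon_1\epsilon_2=-1$. Writing the link symmetry $(-1,-\epsilon_1,\epsilon_2)$ in the form $(-1,\delta_1,\delta_2)$ with $\delta_1=-\epsilon_1$ and $\delta_2=\epsilon_2$, we obtain $\delta_1\delta_2=-\epsilon_1\epsilon_2=1$. By Hartley's observation, a $2$-component link with symmetry $(-1,\delta_1,\delta_2)$ satisfying $\delta_1\delta_2=1$ has vanishing linking number, so $\textrm{link}(\mu_P,P(U))=0$, contradicting $w(P)\neq 0$. Since $\epsilon_1,\epsilon_2\in\{\pm1\}$, the only remaining possibility is $\epsilon_1\epsilon_2=1$, as claimed.

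I do not expect a genuine obstacle here; the content is a bookkeeping exercise built on results already available. The one point demanding care is the sign accounting: one must carry the factor $\epsilon_0\epsilon_1=-\epsilon_1$ faithfully out of Lemma~\ref{lemma:Hartley4.1.3} and observe that the product of the two component-signs of the link equals $-\epsilon_1\epsilon_2$, so that Hartley's hypothesis $\delta_1\delta_2=1$ is triggered precisely in the forbidden case $\epsilon_1\epsilon_2=-1$. The geometric identification of $\textrm{link}(\mu_P,P(U))$ with $w(P)$ is routine, and the argument uses none of its sign information—only that it is nonzero.
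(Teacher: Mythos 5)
Your proposal is correct and follows exactly the paper's own argument: apply Lemma \ref{lemma:Hartley4.1.3} with $\epsilon_0=-1$ to pass to the associated link with symmetry $(-1,-\epsilon_1,\epsilon_2)$, identify $\operatorname{link}(\mu_P,P(U))$ with the winding number, and invoke Hartley's linking-number observation to rule out $\epsilon_1\epsilon_2=-1$. The sign bookkeeping you flag is handled correctly.
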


\begin{proof}
By Lemma \ref{lemma:Hartley4.1.3}, $(S^1\times D^2, P)$ has symmetry $([-1, \epsilon_1], \epsilon_2)$ if and only if its associated link $(S^3,\mu_P,P(U))$ has symmetry $(-1,-\epsilon_1,\epsilon_2)$.  As $\textrm{link}(\mu_P,P(U))$ is equal to the winding number of $P$, the statement readily follows from Hartley's observation above.
\end{proof}

\section{Miyazaki knots are rationally slice}
The goal of this section is to prove Theorem \ref{main1}.  We begin with a lemma that essentially follows from Kawauchi \cite{Kawauchi:1979-1,Kawauchi:2009-1}.  Recall that $\Z[\frac{1}{2}]$ is the subring of $\Q$ generated by $\frac{1}{2}$.
\begin{lemma}\label{lemma:Kawauchi}Suppose that $K$ is a knot in $S^3$.
\begin{enumerate}
\item If $K$ is $-$amphicheiral and hyperbolic, then $K$ is strongly $-$amphicheiral.
\item If $K$ is strongly $-$amphicheiral, then $K$ is $\Z[\frac{1}{2}]$-slice and hence $K$ is rationally slice.
\end{enumerate}
\end{lemma}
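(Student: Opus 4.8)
The plan is to treat the two parts separately, using hyperbolic geometry for (1) and an explicit equivariant quotient construction for (2).

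For part (1), the natural tool is Mostow--Prasad rigidity. Since $K$ is hyperbolic, the complement $M = S^3 \setminus K$ carries a complete finite-volume hyperbolic metric, and any orientation-reversing homeomorphism $f\colon (S^3,K)\to (S^3,K)$ reversing the orientation of $K$ restricts to a self-homeomorphism of $M$. First I would invoke rigidity to isotope $f|_M$ to an isometry $g$ of $M$; because $\operatorname{Isom}(M)$ is finite, $g$ has finite order, and since $g$ preserves the cusp it extends over the Dehn filling to a finite-order self-diffeomorphism of the pair $(S^3,K)$. Recording the effect on the orientations of $S^3$ and of $K$ gives a homomorphism from the finite symmetry group $\operatorname{Sym}(S^3,K)\cong\operatorname{Isom}(M)$ to $\Z_2\oplus\Z_2$, under which $g$ maps to $(-1,-1)$. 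The goal is then to produce an \emph{involution} with the same image.

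The main obstacle in (1) is precisely this last step: a finite-order isometry realizing the symmetry $(-1,-1)$ need not itself be an involution, and replacing $g$ by a power either changes its symmetry type or fails to lower its order to $2$. Writing the order of $g$ as $2^a m$ with $m$ odd, the power $g^m$ still realizes $(-1,-1)$ and has $2$-power order, so the difficulty is confined to the case $a\geq 2$. Ruling this out uses the known structure of symmetry groups of hyperbolic knots (they are cyclic or dihedral) together with the constraint that a putative orientation-preserving periodic symmetry must be compatible with the solution of the Smith conjecture; this is the content of Kawauchi's analysis, which I would cite for the conclusion that a reversing involution exists.

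For part (2), let $\iota\colon(S^3,K)\to(S^3,K)$ be an orientation-reversing involution with $\iota(K)=K$ reversing its orientation. I would build the homology ball directly as a quotient. Define an involution $\sigma$ on $S^3\times[-1,1]$ by $\sigma(x,t)=(\iota(x),-t)$; since $\iota$ and $t\mapsto -t$ are each orientation-reversing, $\sigma$ is orientation-preserving, and it is free away from $\operatorname{Fix}(\iota)\times\{0\}$. Set $W=(S^3\times[-1,1])/\sigma$. The two boundary spheres $S^3\times\{\pm1\}$ are interchanged by $\sigma$, so $\partial W\cong S^3$. The $\sigma$-invariant annulus $K\times[-1,1]$ descends to a surface $D\subset W$; because $\iota|_K$ is an orientation-reversing involution of a circle it has exactly two fixed points, and a local eigenvalue computation shows that $D$ is an embedded disk with $\partial(W,D)=(S^3,K)$.

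It remains to check that $W$ is a $\Z[\tfrac12]$-homology ball, and this is where the factor of $\tfrac12$ enters. Since $|\langle\sigma\rangle|=2$ is invertible in $\Z[\tfrac12]$, the transfer argument gives
\[
H_*(W;\Z[\tfrac12])\cong H_*(S^3\times[-1,1];\Z[\tfrac12])^{\langle\sigma\rangle}.
\]
In degree $0$ the action is trivial, giving $\Z[\tfrac12]$, while on $H_3(S^3\times[-1,1];\Z[\tfrac12])\cong\Z[\tfrac12]$ the map $\sigma_*$ acts by $-1$, as it restricts to the orientation-reversing $\iota$ on the central slice $S^3\times\{0\}$; hence the invariants vanish. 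Thus $W$ has the $\Z[\tfrac12]$-homology of a point, it is the desired homology ball, and $\Z[\tfrac12]\subset\Q$ yields rational sliceness. I expect the one genuinely technical point here to be the fixed-point analysis: when $\operatorname{Fix}(\iota)$ is a $2$-sphere the quotient $W$ is already smooth, but when $\operatorname{Fix}(\iota)$ is a pair of points lying on $K$ the quotient acquires cone singularities on $D$ modelled on the cone over $\mathbb{RP}^3$, which must be resolved by excising them and gluing in smooth $\Z[\tfrac12]$-homology balls while keeping $D$ an embedded disk.
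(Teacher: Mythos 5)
Part (1) is fine as far as it goes: like the paper, you ultimately defer to Kawauchi for the existence of the reversing involution, and your rigidity discussion is a fair account of where the content lies. The genuine gap is in part (2), at exactly the step you flagged as ``the one genuinely technical point.'' The fixed points of $\iota$ necessarily lie on $K$ (an orientation-reversing involution of the circle $K$ has two fixed points, and these exhaust $\operatorname{Fix}(\iota)=S^0$), so at each of them $d\sigma=-\operatorname{id}$ on $\mathbb{R}^4$ and $W=(S^3\times[-1,1])/\sigma$ acquires two cone-on-$\mathbb{RP}^3$ singularities sitting on $D$. Your proposed repair --- excise the cones and glue in smooth $\Z[\frac{1}{2}]$-homology balls with boundary $\mathbb{RP}^3$ --- cannot be carried out: a $\Z[\frac{1}{2}]$-homology ball is in particular a $\Q$-homology ball, and if a rational homology sphere $Y$ bounds a rational homology ball then $|H_1(Y;\Z)|$ is a perfect square, whereas $|H_1(\mathbb{RP}^3;\Z)|=2$. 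A Mayer--Vietoris argument (using that $\mathbb{RP}^3$ is a $\Z[\frac{1}{2}]$-homology sphere) shows that no choice of filling pieces along those two copies of $\mathbb{RP}^3$ can make the result a $\Z[\frac{1}{2}]$-homology ball, so the obstruction is not an artifact of your particular fix.

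The paper's proof sidesteps the singular points entirely by first passing to the $0$-surgery manifold $M$ of $K$: since $\operatorname{Fix}(\tau)\subset K$, the extension $\tau_M$ to $M$ is a \emph{free} orientation-reversing involution, the quotient $M_\tau$ is an honest non-orientable $3$-manifold, and the twisted $I$-bundle $Z$ over $M_\tau$ is a smooth $4$-manifold with $\d Z=M$. One then computes $H_*(Z;\Z[\frac{1}{2}])\cong H_*(S^1;\Z[\frac{1}{2}])$ (your transfer and Euler-characteristic computations adapt directly to this setting) and invokes the standard equivalence between ``$M$ bounds a $\Z[\frac{1}{2}]$-homology $S^1\times D^3$'' and ``$K$ is $\Z[\frac{1}{2}]$-slice'' from \cite[Proposition 1.5]{Cochran-Franklin-Hedden-Horn:2013-1}. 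If you want to retain a direct construction, you should modify it along these lines rather than attempting to resolve the $\mathbb{RP}^3$ cone points.
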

\begin{proof}[Proof of Lemma \ref{lemma:Kawauchi}] (1) is a special case of \cite[Lemma~1]{Kawauchi:1979-1}. We sketch below a proof of (2), which is adapted from \cite[Lemma 2.3]{Kawauchi:2009-1}.

Suppose $K$ is strongly $-$amphicheiral. That is, there is an orientation reversing involution $\tau\colon S^3\to S^3$ such that $\tau(K)=K$ and $\operatorname{Fix}(\tau)=S^0$. Let $M$ be the 0-surgery manifold of $S^3$ along $K$. Then, $\tau$ naturally extends to the orientation reversing involution $\tau_M\colon M\to M$.

Let $M_\tau$ be the quotient space $M/{\sim}$ where $x\sim\tau_M(x)$ for all $x\in M$. Then, $M_\tau$ is a connected and non-orientable 3-manifold. Since $\tau_M$ is an orientation reversing involution, the quotient map $\pi\colon M\to M_\tau$ is the orientation double cover. Let $Z$ be the twisted $I$-bundle over $M_\tau$. That is, $Z$ is the mapping cylinder of $\pi\colon M\to M_\tau$ and  $\d Z= M$. Note that $M_\tau$ is a deformation retract of $Z$ and hence $H_*(M_\tau;\Z)\cong H_*(Z;\Z)$.

In the proof of \cite[Lemma 2.3]{Kawauchi:2009-1}, it is proved that $H_*(Z;\Q)=H_*(S^1;\Q)$ but we will prove a stronger statement that $H_*(Z;\Z[\frac{1}{2}])\cong H_*(S^1;\Z[\frac{1}{2}])$. Since $M_\tau$ is a connected and non-orientable 3-manifold,
\[H_i(Z;\Z)\cong H_i(M_\tau;\Z)=\left\{ \begin{array}{rl}
 \Z&\mbox{if $i=0$} \\
  0 &\mbox{if $i=3,4$.}
       \end{array} \right.
\]
In \cite[Lemma 2.3]{Kawauchi:2009-1}, it is proved that $H_1(Z,M;\Z)=\Z_2$ and $H_1(Z;\Q)=\Q$. As $M$ is the 0-surgery manifold of $S^3$ along $K$, $H_1(M;\Z)=\Z$. From the homology long exact sequence of a pair $(Z,M)$, we have an exact sequence
\[\Z\to H_1(Z;\Z)\to \Z_2\to 0\]
which gives $H_1(Z;\Z)=\Z$ or $\Z\oplus \Z_2$.

Since $\Z[\frac{1}{2}]$ is a torsion free, abelian group, $\Z[\frac{1}{2}]$ is a flat $\Z$-module. Hence, $H_*(-;\Z[\frac{1}{2}])\cong H_*(-;\Z)\otimes\Z[\frac{1}{2}]$ by universal coefficient theorem. It follows that
\[ H_i(Z;\Z[\tfrac{1}{2}])\cong H_*(Z;\Z)\otimes\Z[\tfrac{1}{2}]=\left\{ \begin{array}{cl}
 \Z[\tfrac{1}{2}]&\mbox{if $i=0,1$} \\
  0 &\mbox{if $i=3,4$.}
       \end{array} \right.
\]
Now, we prove that $H_2(Z;\Z[\frac{1}{2}])=0$. Let $p$ be an odd prime. From the universal coefficient theorem,
\[H_i(Z;\Z_p)=\left\{ \begin{array}{cl}
 \Z_p&\mbox{if $i=0,1$} \\
  0 &\mbox{if $i=3,4$.}
       \end{array} \right.
\]
As mentioned above, $H_*(Z;\Q)\cong H_*(S^1;\Q)$, so the Euler characteristic $\chi(Z)$ of $Z$ is zero.  Therefore,
\[ 0=\chi(Z)=\sum_{i=0}^4 (-1)^i\dim_{\Z_p}H_i(Z;\Z_p)=\dim_{\Z_p} H_2(Z;\Z_p)\]
which implies that $H_2(Z;\Z_p)=0$ for all odd prime $p$.

Recall that $H_2(Z;\Q)\cong H_2(S^1;\Q)=0$. This implies that $H_2(Z;\Z)$ is a torsion abelian group. For an odd prime $p$, by universal coefficient theorem, $0=H_2(Z;\Z_p)\cong H_2(Z;\Z)\otimes \Z_p$. Form this, the order of $H_2(Z;\Z)$ is a power of $2$ and $H_2(Z;\Z[\frac{1}{2}])=H_2(Z;\Z)\otimes\Z[\frac{1}{2}]=0$.

In summary, we observed that if $K$ is a strongly $-$amphicheiral knot in $S^3$, then the 0-surgery manifold $M$ bounds a 4-manifold $Z$ such that $H_*(Z;\Z[\frac{1}{2}])\cong H_*(S^1;\Z[\frac{1}{2}])$. It is well-known that the existence of such $Z$ is equivalent to the condition that $K$ is $\Z[\frac{1}{2}]$-slice (see \cite[Proposition 1.5]{Cochran-Franklin-Hedden-Horn:2013-1}). From the universal coefficient theorem, it is easy to see that $K$ is also rationally slice.\end{proof}

Our next lemma concerns Miyazaki knots that are satellite.  Noticeably, we find that the companion knot $J$ of the satellite Miyazaki $K=P(J)$ is also Miyazaki.

\begin{lemma}\label{lemma:induction}If a satellite knot $K=P(J)$ is Miyazaki, then $P$ is an unknotted pattern, $J$ is Miyazaki, and $g(J)<g(K)$.
\end{lemma}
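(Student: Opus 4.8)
The plan is to peel off an outermost companion and transfer each of the three defining properties of a Miyazaki knot (Definition \ref{definition:Miyazaki}) from $K$ to $J$, leaving the genus drop for last. First I would fix a genuine companion torus $T$ bounding a solid torus $V\supset K$ whose core $J$ is non-trivial, chosen so that $J$ is prime (possible for any satellite, e.g. by passing to an innermost companion torus); since $T$ is essential the pattern $P$ is non-trivial. Write $w=w(P)$ for the winding number. The whole argument then splits into an Alexander-polynomial step, a symmetry step via Hartley's lemmas, and a genus step.

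For the polynomial and the pattern, I would use the satellite formula $\Delta_K(t)\doteq\Delta_{P(U)}(t)\,\Delta_J(t^{w})$ together with irreducibility of $\Delta_K$, which forces one factor to be a unit. Since $K$ is fibered, the standard structure theorem for fibered satellites shows $J$ and $P$ (hence $P(U)$) are fibered; in particular $J$ is a non-trivial fibered knot, so $\deg\Delta_J=2g(J)>0$. Before using this I must exclude $w=0$: if $w=0$ then $\pi_1(T)\to H_1(S^3\setminus K)=\mathbb{Z}$ is trivial (both $\mu_J\mapsto w$ and $\lambda_J\mapsto 0$ vanish), so $T$ lifts to an incompressible torus in the infinite cyclic cover $\cong F\times\mathbb{R}$, contradicting that $\pi_1(F\times\mathbb{R})$ is free. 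With $w\neq 0$, $\Delta_J(t^{w})$ is a non-unit, so $\Delta_{P(U)}\doteq 1$; a fibered knot with trivial Alexander polynomial is the unknot, so $P(U)=U$ and $P$ is an unknotted pattern. Then $\Delta_K\doteq\Delta_J(t^{w})$, and since $t\mapsto t^{w}$ is a ring homomorphism, any non-trivial factorization of $\Delta_J$ would give one of $\Delta_K$; hence $\Delta_J$ is irreducible.

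Next I would transfer $-$amphicheirality. Because $P(U)=U$ has no non-trivial companion, the hypotheses of Lemma \ref{lemma:Hartley4.1} hold for our prime $J$, and applying it to the symmetry $(-1,-1)$ of $K$ yields that $J$ has symmetry $(-1,\epsilon_1)$ and $P$ has symmetry $([-1,\epsilon_1],-1)$ for some $\epsilon_1=\pm1$. Since $w\neq 0$, Lemma \ref{remark:Hartley4.1.3} applied to $P$ forces $\epsilon_1\cdot(-1)=1$, so $\epsilon_1=-1$, whence $J$ has symmetry $(-1,-1)$ and is $-$amphicheiral. Combined with the previous step, $J$ is fibered, $-$amphicheiral, with irreducible Alexander polynomial, so $J$ is Miyazaki.

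Finally, fiberedness gives $2g(K)=\deg\Delta_K=|w|\cdot 2g(J)$, so $g(K)=|w|\,g(J)$ with $g(J)\ge 1$, and it remains only to exclude $|w|=1$. For this I would isotope $T$ transverse to the fibration so that $\Sigma:=F\cap V$ is a minimal-genus fiber of the pattern meeting $T$ in $|w|$ longitudes; the additivity $g(\Sigma)+|w|\,g(J)=g(K)$ forces $g(\Sigma)=0$, and for $|w|=1$ the planar surface $\Sigma$ is an annulus joining $P$ to a longitude on $T$, making $P$ isotopic to the core of $V$ — i.e. $P$ trivial, contradicting that $T$ is essential. Hence $|w|\ge 2$ and $g(J)\le\tfrac12 g(K)<g(K)$. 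I expect this last step to be the main obstacle: it is the only place needing genuine $3$-manifold topology (controlling how a minimal fiber meets the companion torus) rather than formal manipulation, and it is exactly where the hypothesis that $K$ is a true satellite is used; a secondary technical point is arranging $J$ prime so that Lemma \ref{lemma:Hartley4.1} applies.
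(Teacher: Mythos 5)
Your proof is correct and follows the same skeleton as the paper's: the cabling formula $\Delta_K(t)=\Delta_P(t)\,\Delta_J(t^{w})$ plus fiberedness to extract the unknotted pattern and the irreducibility of $\Delta_J$, Hartley's Lemma \ref{lemma:Hartley4.1} combined with the linking-number obstruction of Lemma \ref{remark:Hartley4.1.3} to transfer $-$amphicheirality to $J$, and the exclusion of $|w|\in\{0,1\}$ to obtain $g(K)=|w|\,g(J)>g(J)$. The differences lie in how the auxiliary inputs are obtained. Where you reprove $w\neq 0$ by lifting the companion torus to the infinite cyclic cover $F\times\mathbb{R}$ and exclude $|w|=1$ by a direct minimal-fiber/annulus argument, the paper simply cites Hirasawa--Murasugi--Silver (the proof of their Theorem~1 and their Corollary~1, respectively); your geometric arguments are correct and essentially reprove those facts, which buys self-containedness at the cost of some $3$-manifold bookkeeping. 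The one substantive divergence is primality of $J$, needed for Hartley's lemma: you arrange it by choosing a companion torus with prime core, which strictly speaking proves the conclusion only for that chosen decomposition, whereas the lemma (and its later application to a given winding-number-$3$ decomposition) concerns an arbitrary given $K=P(J)$. The paper instead derives primality from the general observation that a fibered knot with irreducible Alexander polynomial is prime (multiplicativity of $\Delta$ under connected sum together with $\deg\Delta=2g$ for fibered knots), applied to $J$ once $\Delta_J$ is known to be irreducible --- a step you could substitute into your argument with no other change, since your polynomial and genus steps never use primality.
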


Assuming Lemma \ref{lemma:induction}, we prove Theorem \ref{main1}.

\begin{proof}[Proof of Theorem \ref{main1}]Suppose that $K$ is a non-trivial Miyazaki knot. That is, $K$ is fibered, $-$amphicheiral with $\Delta_K(t)$ irreducible. We will use an induction on $g(K)$. Suppose that $g(K)=1$. Then, by the classification of genus 1 fibered knots, $K$ is the figure-eight knot which is hyperbolic. By Lemma \ref{lemma:Kawauchi}, $K$ satisfies the conclusion.

Suppose $g(K)>1$. As an induction hypothesis, we assume that if $K'$ is Miyazaki and $g(K')<g(K)$, then $K'$ is $\Z[\frac{1}{2}]$-slice.

Every knot is precisely one of the following: hyperbolic, a torus knot, or a satellite knot. Note that any non-trivial torus knot is not $-$amphicheiral.  In light of  Lemma \ref{lemma:Kawauchi},  we only need to prove the theorem for $K$ a non-trivial satellite knot.

By Lemma \ref{lemma:induction} and the induction hypothesis, we assume $K=P(J)$ such that $P$ is an unknotted pattern, and $J$ bounds a slice disk $\Delta$ in a $\Z[\frac{1}{2}]$-homology $4$-ball $V$. The subsequent construction of a slice disk for $K$ is standard:  Suppose $P\subset S^1\times D^2 \subset \partial (D^2\times D^2)$ bounds a disk $D\subset D^2\times D^2$.  As the tubular neighborhood $\nu(\Delta)\cong\Delta\times D^2$ is diffeomorphic to $D^2\times D^2$, the image of the disk $D\subset D^2\times D^2 \cong \nu(\Delta)\subset V$ under the above diffeomorphism is then the desired slice disk for $K=P(J)\subset V$. This finishes the proof.
\end{proof}

The proof of Lemma \ref{lemma:induction} is the most technical part of the paper. In the course of the proof, we will make use of results of $-$amphicheiral, satellite knots that we discussed earlier in Section 2, and the following criterion for fibered, satellite knots (see, e.g., \cite[Theorem~1]{Hirasawa-Murasugi-Silver:2008-1}).

\begin{lemma}[Criterion for fibered, satellite knots]\label{lemma:fiberedsatellite}
A satellite knot $K=P(J)$ is fibered if and only if both the companion knot $J$ and the pattern $(S^1\times D^2, P)$ are fibered.
\end{lemma}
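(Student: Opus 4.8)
The plan is to prove both implications by decomposing the knot exterior along the satellite torus and comparing a fibration with this splitting. Write $N=\nu(J)$ for the companion solid torus and identify the pattern solid torus $V=S^1\times D^2$ with $N$ so that $K=P(J)$ is the image of $P$. Setting $T=\partial V=\partial N$, the exterior splits as $E(K)=E(J)\cup_T E(P)$, where $E(J)=S^3\setminus \inte N$ is the companion exterior and $E(P)=V\setminus\nu(P)$ is the pattern exterior. Under this identification the longitude $\ell_V=S^1\times\{pt\}$ of $V$ is the longitude $\lambda_J$, and the meridian $m_V$ of $V$ is the meridian $\mu_J$. A preliminary homological observation I would record is that $\lambda_J$ is null-homologous in $E(K)$ (its linking number with $K$ is $w\cdot\lk(\lambda_J,J)=0$, where $w=w(P)$), so \emph{any} fibration class $\phi\colon E(K)\to S^1$ vanishes on $\ell_V$; this is what forces the fibers to meet $T$ longitudinally and makes the two sides compatible.

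For the direction assuming $J$ and $P$ are fibered, I would glue the two fibrations. The companion fibration $\phi_J\colon E(J)\to S^1$ has fiber a Seifert surface $F_J$ with $\partial F_J=\lambda_J$ on $T$, so $\phi_J|_T$ is the longitudinal circle fibration of $T$. The pattern fibration $\phi_P\colon E(P)\to S^1$ has fiber $F_P$ meeting $T$ in $|w|$ parallel longitudes $\ell_V$, so $\phi_P|_T$ is again the longitudinal fibration of $T$ (of degree $w$ in the meridian direction). Because the two suspension flows agree on $T$ after matching base circles --- concretely, the global class is $\mu_K\mapsto 1$, which restricts to $\phi_P$ on $E(P)$ and to $w\cdot\phi_J$ on $E(J)$ --- the fibers glue: $F_K=F_P\cup(|w|\text{ copies of }F_J)$ along the $|w|$ longitudes on $T$, and the monodromy $h_K$ is assembled from $h_P$ and $h_J$ together with a cyclic permutation of the $|w|$ companion copies. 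Checking connectedness of $F_K$ and local triviality across $T$ then produces a fibration of $E(K)$, so $K$ is fibered.

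For the converse --- the substantive direction --- suppose $K$ is fibered with fiber $F_K$ and fibration $\phi\colon E(K)\to S^1$. Since we may assume $K$ is a genuine satellite, $T$ is an incompressible, non-$\partial$-parallel torus in $E(K)$. The key step is to isotope $\phi$ (equivalently $T$) so that $T$ becomes \emph{vertical}, i.e.\ saturated by the suspension flow, so that $\phi|_T\colon T\to S^1$ is itself a circle fibration of $T$. Granting this, $\phi$ restricts to honest fibrations of the two pieces: $\phi|_{E(P)}$ fibers $E(P)$ with fiber $F_K\cap E(P)$, and since its class sends $\mu_P=\mu_K\mapsto 1$ with longitudinal boundary on $T$, this exhibits $(S^1\times D^2,P)$ as a fibered pattern. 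Likewise $\phi|_{E(J)}$ fibers $E(J)$ with fiber $F_K\cap E(J)$; this fiber may a priori be a disjoint union of $|w|$ copies of a surface, but because $H^1(E(J))\cong\Z$ the class is a multiple of the primitive generator $\mu_J^*$, and a fibration with respect to a nonzero multiple forces a fibration with respect to the primitive class (if $c\circ f$ is a fiber bundle with $c\colon S^1\to S^1$ a cover, then $f$ is), whose connected fiber is a Seifert surface of $J$. Hence $J$ is fibered.

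The main obstacle is precisely the isotopy making $T$ vertical. I would obtain it from the structure theorem of Roussarie and Thurston on incompressible surfaces in fibered $3$-manifolds: an incompressible, $\partial$-incompressible surface in a surface bundle over $S^1$ can be isotoped to be either horizontal (a union of fibers) or vertical (a union of flow lines). A torus cannot be horizontal here, since the fiber $F_K$ is a Seifert surface with boundary rather than a closed torus, so $T$ must be isotopic to a vertical torus, which is exactly the saturation needed. The care required is to ensure the isotopy respects the product structure near $\partial E(K)$ and that, after verticalization, the restricted classes are nonzero on each piece --- this is where the winding number enters and where the genuine-satellite hypothesis is used. Degenerate winding-number-zero configurations, should they arise, would need a separate treatment, but for the satellite Miyazaki patterns relevant here the winding number is nonzero (cf.\ Lemma \ref{remark:Hartley4.1.3}).
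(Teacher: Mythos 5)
Your argument is correct in outline, but note that the paper itself contains no proof of this lemma: it is quoted from Hirasawa--Murasugi--Silver \cite[Theorem~1]{Hirasawa-Murasugi-Silver:2008-1}, whose proof is algebraic rather than geometric. They apply Stallings' fibering theorem --- fiberedness is equivalent to finite generation of $\ker(\pi_1(E(K))\to\Z)$ --- and analyze that kernel through the amalgamated-product decomposition of $\pi_1(E(K))$ over $\pi_1(T)$ induced by the splitting $E(K)=E(J)\cup_T E(P)$ that you also use. Your route is the geometric counterpart: glue fiber surfaces for the easy direction, and verticalize the companion torus with respect to the suspension flow for the converse. Both work; yours has the advantage of exhibiting the fiber $F_K=F_P\cup(|w|\text{ copies of }F_J)$ and the monodromy explicitly, and of making visible why a fibered satellite forces $w\neq 0$ --- a fact the paper needs in the proof of Lemma \ref{lemma:induction} and attributes to the proof of \cite[Theorem~1]{Hirasawa-Murasugi-Silver:2008-1} --- while the algebraic argument avoids the foliation-theoretic machinery your converse leans on.

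Two points in your write-up need repair, though neither is fatal. First, the Roussarie--Thurston dichotomy is misstated: \emph{horizontal} means transverse to the fibration, not ``a union of fibers.'' For a closed essential torus the correct statement is that $T$ is isotopic either to a fiber (impossible here, since fibers are Seifert surfaces with boundary, as you observe) or to a torus transverse to every fiber; one then uses the first-return map, which preserves the isotopy class of the multicurve $T\cap F_K$, to isotope $T$ to a flow-saturated torus, i.e.\ the suspension of a monodromy-invariant multicurve. The conclusion you need survives, but as written the cited dichotomy is not the theorem. Second, your deferral of the winding-number-zero case, with a pointer to Lemma \ref{remark:Hartley4.1.3}, is both misdirected and unnecessary: that lemma \emph{assumes} nonzero winding number and concerns symmetry signs, so it cannot rule out $w=0$; and the statement being proved is an unconditional equivalence, so you cannot restrict to the patterns ``relevant here.'' In fact your own preliminary observation closes the case: $\phi(\ell_V)=0$ always, so $\phi|_T$ is determined by $\phi(m_V)=w$; if $w=0$ then $\phi|_T=0$ and incompressibility would inject $\pi_1(T)\cong\Z\oplus\Z$ into $\ker\phi\cong\pi_1(F_K)$, a free group --- a contradiction. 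Hence a fibered satellite automatically has $w\neq 0$, and no separate treatment is needed.
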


\begin{proof}[Proof of Lemma \ref{lemma:induction}] Suppose $K$ is satellite and Miyazaki. That is, $K$ is satellite, fibered, and $-$amphicheiral with $\Delta_K(t)$ irreducible. We first observe that $K$ is a prime knot. Suppose $K=K_1\#K_2$. Since $K$ is fibered, $K_1$ and $K_2$ are also fibered. Then, $\deg\Delta_{K_i}(t)=2g(K_i)$ and $\Delta_K(t)=\Delta_{K_1}(t)\Delta_{K_2}(t)$. Since $\Delta_K(t)$ is irreducible, after a possible reordering, $\Delta_{K_1}(t)=1$ which implies that $g(K_1)=0$ and $K$ is prime. Our argument shows that a fibered knot with irreducible Alexander polynomial is prime.



Since $K$ is a satellite knot, we can write $K=P(J)$ such that $J$ is a non-trivial knot and the pattern $P$ is not isotopic to the core of the solid torus. Since $K$ is fibered, Lemma \ref{lemma:fiberedsatellite} implies that $(S^1\times D^2,P)$ and $J$ are fibered. Moreover, the winding number $w$ of $P$ must be non-zero, according to the proof of \cite[Theorem 1]{Hirasawa-Murasugi-Silver:2008-1}.

We have a cabling formula $\Delta_K(t)=\Delta_P(t)\Delta_J(t^{w})$ where $w$ is the winding number. Since $P$ and $J$ are fibered, $\deg\Delta_P(t)=2g(P)$ and $\deg \Delta_J(t)=2g(J)$. From the irreducibility of $\Delta_K(t)$ and $\Delta_J(t^w)\neq 1$, we conclude that $\Delta_P(t)=1$ and hence $P$ is an unknotted pattern.

After a possible simultaneous change of the orientations of $P$ and $K$, we can assume that the winding number $w$ is positive. By \cite[Corollary 1]{Hirasawa-Murasugi-Silver:2008-1}, if $P$ is a winding number 1 unknotted pattern such that $(S^1\times D^2, P)$ is fibered, then $P$ is isotopic to the core of $S^1\times D^2$. Therefore, $|w|\geq 2$. From the cabling formula and fiberedness, $g(K)=|w|g(J)>g(J)$. Since $\Delta_K(t)$ is irreducible, $\Delta_J(t^w)=\Delta_K(t)$ implies that $\Delta_J(t)$ is also irreducible.

It remains to prove that $J$ is $-$amphicheiral. For this purpose, we apply Lemma \ref{lemma:Hartley4.1}. We first check that $J$ and $P$ satisfies the hypothesis of Lemma \ref{lemma:Hartley4.1}. In the beginning of the proof, we proved that a fibered knot with irreducible Alexander polynomial is prime. Therefore, $J$ is also prime. Recall that we are assuming $J$ is non-trivial. We proved that $P(U)$ is the unknot. Therefore, neither $J$ nor its mirror image is a companion of $P(U)$.

Recall that $K$ is $-$amphicheiral if and only if $(S^3,K)$ has symmetry $(-1,-1)$. Lemma \ref{lemma:Hartley4.1} then implies that $(S^3, J)$ has symmetry $(-1, \epsilon_1)$ and $(S^1\times D^2, P)$ has symmetry $([-1, \epsilon_1],-1)$ for some $\epsilon_1=\pm 1$. Since $P$ is a pattern with non-zero winding number, Lemma \ref{remark:Hartley4.1.3} gives that $\epsilon_1=-1$. This shows that $(S^3,J)$ has symmetry $(-1,-1)$ and hence $J$ is $-$amphicheiral. This completes the proof.
\end{proof}

We finish this section by recalling the following results of \cite{Miyazaki:1994-1}, which will then be used to prove Corollary \ref{corollary:slice-ribbon}.


\begin{lemma}[{\cite{Miyazaki:1994-1}}]\label{lemma:Miyazaki}Suppose that $J$ is a non-trivial fibered knot and $p>1$.
\begin{enumerate}
\item If $K=J_{p,q}$ is the $(p,q)$-cable of $J$ and $\underbrace{K\#\cdots\#K}_\text{n}$ is homotopy ribbon for some $n$, then so is $K$.
\item If $J$ is Miyazaki and $\#_{n=1}^ma_nJ_{2n,1}$ is homotopy ribbon, then $a_n=0$ for all $n$.
\end{enumerate}
\end{lemma}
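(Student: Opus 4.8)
The plan is to use the Casson--Gordon criterion \cite[Theorem 5.1]{Casson-Gordon:1983-1}: a fibered knot is homotopy ribbon if and only if the monodromy of its fiber extends over a handlebody bounded by the fiber. Throughout, note that $K=J_{p,q}$ is fibered (it is a satellite with the fibered torus-knot pattern $T_{p,q}$ in the solid torus, so Lemma \ref{lemma:fiberedsatellite} applies), that its mirror-reverse $-K$ is fibered as well, and that a connected sum of fibered knots is fibered; hence every knot appearing in the statement is fibered and the criterion is available.

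For (1), I would argue that a handlebody extension of the monodromy of $nK=K\#\cdots\#K$ can be restricted to a single summand. The fiber $F_{nK}$ is the boundary connected sum of $n$ copies of the fiber $F_K$, and because $p>1$ the companion torus equips each $F_K$ with a nontrivial canonical (characteristic) curve system recording the cabling, which the monodromy $h_{nK}$ preserves up to isotopy by its uniqueness. The key step is to show that a handlebody extension $\bar h$ of $h_{nK}$ can be arranged to preserve one summand $F_K\subset F_{nK}$ together with a complementary handlebody, so that its restriction is a handlebody extension of $h_K$, exhibiting $K$ as homotopy ribbon. I expect this restriction-along-the-canonical-decomposition to be the main obstacle, as it is the genuinely three-dimensional content and is precisely where Miyazaki's analysis of nonsimple fibered knots enters.

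For (2), set $W=\#_{n=1}^{m} a_n J_{2n,1}$ and suppose $W$ is homotopy ribbon; I want to conclude $a_n=0$ for all $n$. Since $J$ is Miyazaki, $\Delta_J$ is irreducible, so the cables $J_{2n,1}$ have pairwise distinct winding numbers $2n$ and pairwise distinct irreducible Alexander polynomials $\Delta_J(t^{2n})$. Consequently, for a fixed winding number $2N$ the corresponding summands are distinguished inside $W$ by the characteristic decomposition of its fiber, and---arguing exactly as in (1)---a handlebody extension of the monodromy of $W$ restricts to one of the winding-number-$2N$ block, which is precisely $a_N J_{2N,1}$. Thus $a_N J_{2N,1}$ is homotopy ribbon for every $N$.

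Finally I would invoke (1) together with Proposition \ref{example:Miyazaki}. Homotopy ribbonness is preserved under mirror image and under reversal of orientation, so $J_{2N,1}$ is homotopy ribbon if and only if $-J_{2N,1}$ is, and hence $a_N J_{2N,1}$ homotopy ribbon forces $|a_N|$ copies of $J_{2N,1}$ to be homotopy ribbon. If $a_N\neq 0$, then part (1) with $K=J_{2N,1}$ and $n=|a_N|$ gives that $J_{2N,1}$ itself is homotopy ribbon, contradicting Proposition \ref{example:Miyazaki}, which asserts that $J_{2N,1}$ is \emph{not} homotopy ribbon for Miyazaki $J$. Therefore $a_N=0$ for every $N$. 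The only delicate points are the winding-number separation (again resting on the uniqueness of the characteristic decomposition, the obstacle shared with (1)) and the sign bookkeeping, both of which are routine given the irreducibility of $\Delta_J$ and the mirror/reverse invariance of homotopy ribbonness.
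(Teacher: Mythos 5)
Your part (1) is not actually a proof: the step you flag as ``the main obstacle''---arranging a handlebody extension of the monodromy of $K\#\cdots\#K$ to preserve one summand of the fiber together with a complementary handlebody---is the entire content of Miyazaki's Theorem 8.5.1, and it is far from routine (it requires his analysis of how homeomorphisms of a handlebody interact with the characteristic torus decomposition of the fiber complement, and it is exactly where the hypotheses ``$J$ non-trivial fibered'' and ``$p>1$'' are used). The paper treats (1) purely as a citation of that theorem, so an honest outline would be acceptable, but you should not present the restriction step as something you expect to carry out from scratch.

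The genuine gap is in part (2). You assert, ``arguing exactly as in (1),'' that a handlebody extension of the monodromy of $W=\#_n a_nJ_{2n,1}$ restricts to each winding-number block, so that $a_NJ_{2N,1}$ is homotopy ribbon for every $N$. This block-splitting claim is not a formal consequence of (1) and is not justified; indeed, homotopy ribbonness of a connected sum does \emph{not} in general restrict to sub-sums ($K\#-K$ is ribbon for every fibered $K$ that is not homotopy ribbon), so any such restriction argument must exploit the fact that all summands in a block carry the same orientation and mirror type---and establishing that is precisely the deep input. The paper imports it in the form of Miyazaki's Theorem 8.6, a \emph{pairing} theorem: if a connected sum of iterated cables of prime fibered knots with no factor $f(t)f(t^{-1})$ in their Alexander polynomials is homotopy ribbon, the summands pair off as $K_{2n-1}=-K_{2n}$. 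Combined with $g(J_{2n,1})=2ng(J)$ (which forces the pairing to stay within a block), this yields that each $a_n$ is even and that $J_{2k,1}$ is $-$amphicheiral whenever $a_k\neq 0$; then $J_{2k,1}\#J_{2k,1}=J_{2k,1}\#-J_{2k,1}$ is ribbon and (1) with $n=2$ gives the contradiction with Proposition \ref{example:Miyazaki}. Your endgame (mirror invariance of homotopy ribbonness plus (1)) is fine once a block is known to be homotopy ribbon, but the route to that fact must go through Theorem 8.6 or an equivalent. One further small error: $\Delta_{J_{2n,1}}(t)=\Delta_J(t^{2n})$ need not be irreducible even when $\Delta_J$ is (for $J=4_1$, $\Delta_J(t^2)=(t^2-t-1)(t^2+t-1)$); the cables are distinguished by genus, not by irreducible Alexander polynomials.
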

\begin{proof}
(1) This is a restatement of \cite[Theorem 8.5.1]{Miyazaki:1994-1}.

(2) We show that it is a consequence of Theorem 8.6 of \cite{Miyazaki:1994-1}. Indeed, define $\mathcal{K}$ to be the class of knots that consists of all iterated cables of all prime fibered knots $J$ with the condition that there is no non-trivial Laurent polynomial $f(t)f(t^{-1})$ divides $\Delta_J(t)$. 
In \cite[Theorem 8.6]{Miyazaki:1994-1}, Miyazaki proved that if $K_1,\ldots, K_m$ are knots in $\mathcal{K}$ such that $K_1\#\cdots \#K_m$ are homotopy ribbon, then $m=2l$ and $K_{2n-1}=-K_{2n}$ for $n=1,\ldots,l$ after a possible relabeling of indices.

Suppose $J$ is Miyazaki and $\#_{n=1}^m a_nJ_{2n,1}$ is homotopy ribbon.  We further assume that $a_k$ is non-zero for some $k$. Since $J$ is fibered, $g(J_{2n,1})=2ng(J)$ and hence $J_{2i,1}$ is not equal to $-J_{2j,1}$ if $i\neq j$.  Note that $J_{n,1}$ is in $\mathcal{K}$ for all $n$ if $J$ is Miyazaki, so \cite[Theorem 8.6]{Miyazaki:1994-1} shows that $a_n$ is even for every $n$ and $J_{2k,1}$ is $-$amphicheiral. Thus, $J_{2k,1}\#J_{2k,1}=J_{2k,1}\#-J_{2k,1}$ is ribbon. By (1), this implies that $J_{2k,1}$ is homotopy ribbon, and this contradicts to Proposition \ref{example:Miyazaki} where it is shown that $J_{2n,1}$ is not homotopy ribbon for a Miyazaki knot $J$.
\end{proof}

\begin{proof}[Proof of Corollary \ref{corollary:slice-ribbon}]Suppose $K$ is a Miyazaki knot and that $\#_{n=1}^m a_nK_{2n,1}$ is slice. Assuming the slice-ribbon conjecture, $\#_{n=1}^m a_n K_{2n,1}$ is (homotopy) ribbon. By Lemma \ref{lemma:Miyazaki}(2), $a_n=0$ for every $n$. It follows that $\{K_{2n,1}\}_{n=1}^\infty$ form a $\Z^\infty$-subgroup in $\mathcal{C}$. Since $K$ is Miyazaki, $K$ is rationally slice from Theorem \ref{main1}. This completes the proof that $\{K_{2n,1}\}$ generate a $\Z^\infty$-subgroup in $\Ker(\mathcal{C}\to \mathcal{C}_\Q)$ assuming the slice-ribbon conjecture.
\end{proof}

\section{Examples of Miyazaki knots}

Our next proposition exhibits an infinite family of satellite Miyazaki knots. In particular, we will see that the knot given in Example \ref{example1} is Miyazaki.  To the best of the authors' knowledge, there has been no construction of satellite Miyazaki knot before.

\begin{proposition}\label{SatelliteMiyazaki}
Suppose $J$ is the figure-eight knot, and $P_n$ is the closure of a $(2n+1)$-braid of the form $\prod_{i=0}^{2n} \sigma_{2n-i}^{(-1)^i}$. If $2n+1$ is a power of $5$, then the satellite knot $K=P_n(J)$ is Miyazaki.
\end{proposition}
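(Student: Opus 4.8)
The plan is to verify the three defining properties of a Miyazaki knot for $K=P_n(J)$: that it is fibered, $-$amphicheiral, and has irreducible Alexander polynomial. Write $\beta_n$ for the given braid and $w=2n+1$ for its number of strands, so that the closure $\hat\beta_n=P_n(U)$ has winding number $w$ in the solid torus. The starting observation, used repeatedly, is that $\beta_n$ uses each of the generators $\sigma_1,\dots,\sigma_{2n}$ exactly once. Consequently Seifert's algorithm applied to $\hat\beta_n$ produces $w$ disks joined by $2n$ bands along a path (a tree), a surface with $\chi=w-(w-1)=1$ and one boundary component, hence a disk. Thus $P_n(U)$ is the unknot, $P_n$ is an unknotted pattern, and $\Delta_{P_n}(t)=1$.

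For fiberedness I would argue that $P_n$ is a fibered pattern because $\beta_n$ is homogeneous (each generator, occurring once, appears with a single sign), so that its Bennequin surface is a fiber; since the figure-eight $J$ is fibered, Lemma~\ref{lemma:fiberedsatellite} then yields that $K=P_n(J)$ is fibered. For $-$amphicheirality I would follow Example~\ref{example1} verbatim: the involution $f(z,\theta)=(-z,-\theta)$ is an orientation-reversing involution of the solid torus sending $[\lambda]\mapsto-[\lambda]$ and carrying $\hat\beta_n$ to the closure of the sign-reversed braid, which is isotopic to $\hat\beta_n$ in the solid torus; this realizes the symmetry $([J_-,-1],-1)$ of $P_n$. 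Since $P_n(U)$ is the unknot (so neither $J$ nor its mirror is a companion of $P_n(U)$), $J$ is prime, non-trivial, and $-$amphicheiral with symmetry $(-1,-1)$, and $w\neq 0$, Lemma~\ref{lemma:Hartley4.1} applied with $\epsilon_1=-1$ combines these to give $K$ the symmetry $(-1,-1)$, i.e. $K$ is $-$amphicheiral.

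The arithmetic heart, and the only step that genuinely uses the hypothesis $w=5^k$, is irreducibility of $\Delta_K$. By the cabling formula together with $\Delta_{P_n}=1$ we have $\Delta_K(t)\doteq\Delta_J(t^{w})=t^{2w}-3t^{w}+1$, and I would prove this irreducible over $\Q$ by Eisenstein's criterion at the prime $5$ after the substitution $t\mapsto s-1$. Modulo $5$ one has $\Delta_K(t)\equiv t^{2w}+2t^{w}+1=(t^{w}+1)^2$, and when $w=5^{k}$ the Frobenius identity gives $t^{w}+1\equiv (t+1)^{w}\pmod 5$, so $\Delta_K(t)\equiv (t+1)^{2w}\pmod 5$. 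Hence $g(s):=\Delta_K(s-1)\equiv s^{2w}\pmod 5$, so every non-leading coefficient of $g$ is divisible by $5$; its constant term is $g(0)=\Delta_K(-1)=1+3+1=5$, divisible by $5$ but not by $25$; and its leading coefficient is $1$. Eisenstein then gives that $g$, and therefore $\Delta_K$, is irreducible over $\Q$.

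I expect the main obstacle to be precisely this irreducibility, as it is the Eisenstein argument that forces the power-of-$5$ hypothesis: the congruence $t^{w}+1\equiv (t+1)^{w}\pmod 5$ collapses exactly when $w$ is a power of $5$. (In fact a Galois degree count in $\Q(\sqrt5)$, using that $\phi^{2}$ is not a $p$-th power for any odd prime $p$ and that $\sqrt5\in\Q(\phi^{2/w})$, shows $\Delta_J(t^{w})$ is irreducible for every odd $w$; but the clean, self-contained proof is the Eisenstein one, which needs $w=5^{k}$.) The remaining care lies in the two topological inputs flagged above: confirming, as in Example~\ref{example1}, that the sign-reversed closure is genuinely isotopic to $\hat\beta_n$ inside the solid torus so that $f$ realizes the claimed pattern symmetry, and confirming that homogeneity of $\beta_n$ yields a fibered \emph{pattern}, not merely a fibered knot in $S^3$, so that Lemma~\ref{lemma:fiberedsatellite} applies.
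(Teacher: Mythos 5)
Your proposal is correct and follows essentially the same route as the paper: Stallings' homogeneous-braid criterion plus Lemma~\ref{lemma:fiberedsatellite} for fiberedness, the involution of Example~\ref{example1} plus Lemma~\ref{lemma:Hartley4.1} for $-$amphicheirality, and Eisenstein at $p=5$ after the shift $t\mapsto t-1$ for irreducibility (your Frobenius congruence $(t+1)^{5^k}\equiv t^{5^k}+1$ is the same fact as the paper's divisibility of $\binom{5^m}{k}$ by $5$). The only additions are your explicit Seifert-surface verification that $P_n(U)$ is unknotted, which the paper leaves implicit, and the parenthetical Galois remark.
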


\begin{proof}
Recall from Definition \ref{definition:Miyazaki}, we need to prove that $K$ is fibered, $-$amphicheiral, and $\Delta_K(t)$ is irreducible.

A braid $\beta$ is called \emph{homogeneous} if each standard braid generator $\sigma_i$ appears at least once in $\beta$ and the exponent on $\sigma_i$ has the same sign in each appearance in the braid word $\beta$ (for example, if $\sigma_i$ appears, then $\sigma_i^{-1}$ does not appear).  A theorem of Stalling \cite[Theorem 2]{Stallings:1978-1} says that $(S^1\times D^2,\hat{\beta})$ is fibered for any homogeneous braid $\beta$. Since $P_n$ is the closure of a homogeneous braid $\prod_{i=0}^{2n} \sigma_{2n-i}^{(-1)^i}$, $(S^1\times D^2,P_n)$ is fibered. As $J=4_1$ is fibered, we see that $K=P_n(J)$ is also fibered from Lemma \ref{lemma:fiberedsatellite}.

Just like Example \ref{example1}, we observe that $P_n$ is an unknotted pattern and $(S^1\times D^2, P_n)$ has symmetry $([J_-, -1]. -1)$.  Hence $P_n(J)$ has symmetry $(J_-, -1)$ by Lemma \ref{lemma:Hartley4.1}, which is strongly $-$amphicheiral.  Next, we apply the cabling formula of the Alexander polynomial $\Delta_K(t)=\Delta_{P_n}(t)\Delta_J(t^{2n+1})$ (the winding number is $2n+1$), so $\Delta_K(t)=t^{2(2n+1)}-3t^{2n+1}+1$.

Note that $\Delta_K(t)$ is irreducible if and only if $\Delta_K(t-1)$ is irreducible, and suppose
\begin{equation*}
\begin{split}
\Delta_K(t-1)&=(t-1)^{2(2n+1)}-3(t-1)^{2n+1}+1\\
&=((t-1)^{2n+1}+1)^2-5(t-1)^{2n+1}\\
&=t^{4n+2}+a_{4n+1}t^{4n+1}+\cdots +a_1t+5.
 \end{split}
\end{equation*}
We claim that each $a_i$ is a multiple of 5 when $2n+1$ is a power $5^m$ for some $m$.  This follows from the well-known fact that the binomial coefficient ${5^m}\choose{k}$ is divisible by 5 for all $0<k<5^m$, which then implies that the first term in the second line of the above identity $((t-1)^{2n+1}+1)^2$ is equal to $(t^{2n+1})^2$ as polynomials with coefficients modulo 5.

Finally, we apply the Eisenstein criterion with $p=5$ to prove the irreducibility of $\Delta_K(t-1)$, i.e., we check that $p$ divides each $a_i$, and $p^2$ does not divide $a_0=5$.  As $\Delta_K(t)$ is irreducible if and only if $\Delta_K(t-1)$ is irreducible, this completes the proof.
\end{proof}

\begin{remark}(1) There is an alternative way to see that $(S^1\times D^2,P_n)$ is fibered due to Morton \cite[page 102]{Morton:1978-1}. Since $P_n$ is an unknotted pattern, $S^3-\nu(P_n(U))$ is the solid torus $V$. By definition, $P_n$ is the closure of a braid $\prod_{i=0}^{2n} \sigma_{2n-i}^{(-1)^i}$ and the meridian $\mu_{P_n}$ of $P_n$ is the axis of the braid. Hence, $S^1\times D^2-\nu(P_n)\cong V-\nu(\mu_{P_n})$ is fibered over $S^1$ having a disk with $2n+1$ holes as fiber.\\
(2) We used Mathematica and checked that the polynomials $t^{2(2n+1)}-3t^{2n+1}+1$ are irreducible for all $n<100$ (and consequently, the knots $K=P_n(J)$ are Miyazaki).  We speculate that the technical assumption on $2n+1$ being a power of $5$ may be unnecessary.

\end{remark}

Note that the Miyazaki knots constructed in Proposition \ref{SatelliteMiyazaki} are strongly $-$amphicheiral.  In general, remember that Miyazaki knots are $-$amphicheiral, and hyperbolic $-$amphicheiral knots are strongly $-$amphicheiral.  We ask:

\begin{question}
Are Miyazaki knots always strongly $-$amphicheiral?
\end{question}

From the discussion in the previous section, we see that all Miyazaki knots can be obtained from hyperbolic ones via iterated satellite operations. This inspires us to look for an inductive approach, and we establish the following result along this direction.

\begin{proposition}
Suppose $K=P(J)$ is a Miyazaki knot with a hyperbolic companion $J$ and a pattern of winding number $3$.  Then $K$ is strongly $-$amphicheiral.
\end{proposition}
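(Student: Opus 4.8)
The plan is to reduce the claim to a statement purely about the pattern $P$, and then to promote the known orientation-reversing \emph{symmetry} of $P$ to an \emph{involution} using the geometry of the associated link. First, since $K=P(J)$ is a satellite Miyazaki knot, Lemma~\ref{lemma:induction} gives that $P$ is an unknotted pattern of non-zero winding number (here $w=3$) and that $J$ is itself Miyazaki; as $J$ is assumed hyperbolic, Lemma~\ref{lemma:Kawauchi}(1) upgrades its $-$amphichirality to strong $-$amphichirality, so $(S^3,J)$ has symmetry $(J_-,-1)$. Running the symmetry analysis from the proof of Lemma~\ref{lemma:induction} (Lemma~\ref{lemma:Hartley4.1} together with Lemma~\ref{remark:Hartley4.1.3}, which forces $\epsilon_1=-1$ because $w\neq 0$), I obtain that $(S^1\times D^2,P)$ has symmetry $([-1,-1],-1)$. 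By the ``if'' direction of Lemma~\ref{lemma:Hartley4.1} applied with $\alpha=J_-$ and $\epsilon_1=-1$, it therefore suffices to prove the \emph{involutive} statement that $(S^1\times D^2,P)$ has symmetry $([J_-,-1],-1)$: gluing this pattern involution to the involution realizing $(J_-,-1)$ on $(S^3,J)$ then yields symmetry $(J_-,-1)$ for $(S^3,K)$, i.e. shows $K$ is strongly $-$amphicheiral. Note the hypotheses of Lemma~\ref{lemma:Hartley4.1} hold, since $J$ is hyperbolic (hence prime and non-trivial) and $P(U)$ is the unknot.

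Next I would translate the pattern condition into the language of the associated link. By Lemma~\ref{lemma:Hartley4.1.3}, the symmetry $([-1,-1],-1)$ of $P$ is equivalent to the symmetry $(-1,1,-1)$ of the associated link $L=(S^3,\mu_P,P(U))$, while the target symmetry $([J_-,-1],-1)$ is equivalent to $(J_-,1,-1)$. Thus the whole problem becomes: the two-component link $L$, whose components are both unknots with $\operatorname{link}(\mu_P,P(U))=w=3$, carries an orientation-reversing symmetry of type $(-1,1,-1)$, and I must realize this symmetry by an \emph{involution}. The key structural input is that $P$ is a fibered pattern (Lemma~\ref{lemma:fiberedsatellite}): the complement $S^3\setminus L$ is homeomorphic to the pattern exterior $(S^1\times D^2)\setminus\nu(P)$, which fibers over $S^1$. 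The Nielsen--Thurston trichotomy then organizes the analysis of $S^3\setminus L$ according to whether the monodromy is periodic (Seifert fibered), reducible (toroidal), or pseudo-Anosov (hyperbolic).

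The main obstacle is this final realization step, because an orientation-reversing symmetry of a hyperbolic $3$-manifold need not contain an orientation-reversing involution in its finite isometry group: a cyclic isometry group of order $4$ with orientation-reversing generator is the prototypical failure, its only involution being orientation-preserving. To overcome this I would use equivariant geometrization, isotoping the symmetry to preserve the geometric decomposition of $S^3\setminus L$, ruling out the reducible case using the primality of $w=3$ and the fiberedness of $P$, and on the remaining Seifert or hyperbolic pieces producing an honest involution. Here the two hypotheses must do real work: the oddness of $w=3$ controls the action on $H_1$ of the companion torus (via Lemma~\ref{remark:Hartley4.1.3}) and should let one arrange the orientation-reversing map to square to the identity, while the hyperbolic companion supplies, through Lemma~\ref{lemma:Kawauchi}(1), a genuine orientation-reversing involution on the companion piece that can serve as a seed and be extended equivariantly across the winding-number-$3$ pattern region. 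Verifying that the orientation data $(+,-)$ on $(\mu_P,P(U))$ together with $w=3$ guarantees an orientation-reversing \emph{involution}, rather than merely a finite-order symmetry, is where the technical heart of the proof lies.
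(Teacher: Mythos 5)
Your reduction is sound and matches the paper's first step: Lemma~\ref{lemma:induction} and Lemma~\ref{lemma:Kawauchi}(1) give a strongly $-$amphicheiral companion, and by Lemma~\ref{lemma:Hartley4.1} it suffices to show that $(S^1\times D^2,P)$ has symmetry $([J_-,-1],-1)$; the translation to the associated link via Lemma~\ref{lemma:Hartley4.1.3} is also correct. But from that point on your argument is a program, not a proof. You correctly identify that an orientation-reversing symmetry of the pattern need not be realizable by an involution (the cyclic order-$4$ isometry group with orientation-reversing generator being the standard obstruction), you propose to overcome this by equivariant geometrization and a Nielsen--Thurston case analysis, and then you explicitly defer the decisive verification (``where the technical heart of the proof lies''). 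Nothing in the hypotheses as you have deployed them --- oddness of $w=3$, hyperbolicity of $J$ --- visibly excludes the order-$4$ scenario on the pattern piece, so the gap is not cosmetic: the step you leave open is exactly the step that requires an argument.

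The paper avoids the realization problem entirely by a finiteness input you do not use: by a classification going back to Magnus--Peluso, there are only three fibered patterns of winding number $3$ in the solid torus up to isotopy, namely the closures of the $3$-braids $\sigma_1\sigma_2$, $\sigma_1^{-1}\sigma_2^{-1}$, and $\sigma_1^{-1}\sigma_2$. The first two are cable patterns, and the paper rules them out by showing that a $(p,q)$-cable with $p>1$ is never Miyazaki: if $J_{p,q}$ were Miyazaki, then $J_{p,q}\#J_{p,q}=J_{p,q}\#-J_{p,q}$ would be ribbon, so by Lemma~\ref{lemma:Miyazaki} the knot $J_{p,q}$ would be homotopy ribbon, forcing $(J_{p,q})_{2,1}$ to be homotopy ribbon and contradicting Proposition~\ref{example:Miyazaki}. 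For the single remaining pattern $\sigma_1^{-1}\sigma_2$, the symmetry $([J_-,-1],-1)$ is exhibited by the explicit involution $(z,\theta)\mapsto(-z,-\theta)$ as in Example~\ref{example1}. To repair your proof you would need either to carry out the equivariant realization argument in full (in particular, to prove that the relevant symmetry group actually contains an orientation-reversing involution), or to import the classification of fibered winding-number-$3$ patterns and the cable exclusion as the paper does.
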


\begin{proof}
From Lemma \ref{lemma:induction}, we see that $J$ is also Miyazaki and $P$ is a fibered unknotted pattern.  Lemma \ref{lemma:Kawauchi} then implies that $J$ is strongly $-$amphicheiral.  In light of Lemma \ref{lemma:Hartley4.1}, it suffices to prove that $(S^1\times D^2, P)$ has symmetry $([J_-, -1], -1)$.

Note that $P$ is a fibered pattern of winding number $3$, and there are only three such patterns up to isotopy in $S^1\times D^2$, corresponding to the closure of conjugacy class of $3$-braids $\sigma_1\sigma_2$, $\sigma_1^{-1}\sigma_2^{-1}$ and $\sigma_1^{-1}\sigma_2$ \cite{Magnus-Peluso:1967-1}.  The first two patterns of $P$ give cable knots $K=J_{3,1}$ and $J_{3,-1}$, respectively.  We will prove that $K$ is not Miyazaki, and thus these two cases do not occur.  This leaves the third pattern as the only possibility.  For this case, we observe that the closure of the braid $\sigma_1^{-1}\sigma_2$ has indeed the symmetry $([J_-, -1], -1)$ (see the explanation in Example \ref{example1}), and this finishes the proof.

Let us prove the more general statement that a $(p,q)$-cable knot $K=J_{p,q}$ with $p>1$ is not Miyazaki.  We use proof by contradiction, and assume, to the contrary, that $K=J_{p,q}$ was $-$amphicheiral and fibered.  As $K\# K=K\# -K$ is ribbon,  Lemma \ref{lemma:Miyazaki} implies that $K$ is homotopy ribbon.  Since an $(m,1)$-cable of a homotopy ribbon, fibered knot is also homotopy ribbon \cite[page 2]{Miyazaki:1994-1}, we conclude that $K_{2,1}$ is homotopy ribbon.  But that contradicts to Proposition \ref{example:Miyazaki} where it is shown that $K_{2,1}$ is not homotopy ribbon when $K$ is Miyazaki.
\end{proof}

\section{$\nu^+$-invariants of satellites do not detect slice knots}
Throughout this section, the unknot is denoted by $U$. A pattern $Q\subset S^1\times D^2$ is called a \emph{slice pattern} if $Q(U)$ is slice.  The $\Q$-homology $0$-bipolar knot is defined in \cite[Definition 2.3]{Cha-Powell:2014-1} as a $\Q$-homology version of the notion of $0$-bipolar knots introduced in \cite{Cochran-Harvey-Horn:2013-1}.  For the reader's convenience, we recall the definition here.

\begin{definition}\label{Def:HBP}
A knot $K$ in $S^3$ is \emph{$\Q$-homology $0$-bipolar} if there exist pairs $(V_+,D_+)$, $(V_-,D_-)$ of compact smooth 4-manifold $V_\pm$ and a smoothly embedded disk $D_\pm$ in $V_\pm$ such that
\begin{enumerate}
\item  $\partial(V_\pm,D_\pm)=(S^3,K)$.
\item $H_1(V_\pm;\Q)=0$.
\item $V_{\pm}$ is $\pm$-definite. That is, $b^\pm_2(V_\pm)=b_2(V_\pm)$.
\item $[D_\pm, \d D_\pm]=0\in H_2(V_\pm,S^3;\Q)$.
\end{enumerate}
\end{definition}

We will use the following facts about $\Q$-homology $0$-bipolar knots.
\begin{itemize}
\item[(B1)]\label{item:RS-HBP} A rationally slice knot is $\Q$-homology $0$-bipolar (Compare Definition \ref{Def:RS} and \ref{Def:HBP})
\item[(B2)]\label{item:satellite-HBP} If $K$ is $\Q$-homology $0$-bipolar
  knot, then $Q(K)$ is $\Q$-homology $0$-bipolar for any slice pattern~$Q$ \cite[Theorem~2.6~(6)]{Cha-Powell:2014-1}.
\item[(B3)]\label{item:HBP-d}$V_0(K)=V_0(-K)=0$ if $K$ is $\Q$-homology
  $0$-bipolar \cite[Theorem~2.7]{Cha-Powell:2014-1}.
\end{itemize}
 We remark that (B2) and (B3) are mild generalizations of
\cite[Propositions~3.3 and~1.2]{Cochran-Harvey-Horn:2013-1}.  The slice disk for $Q(K)$ in (B2) is constructed in the same way as that in the proof of Theorem \ref{main1}. (B3) was originally stated in terms of the correction terms $d(S^3_1(K))=d(S^3_1(-K))=0$, and we refer to \cite{Hom-Wu:2014-1} for the equivalence of these two identities. The function $V_0(K)$, and more generally, the sequence $\{V_k(K)\}$ of a knot $K$ will be defined shortly.

Next, we sketch the construction of $\nu^+$ and relevant background of Heegaard Floer homology.  For a knot $K\subset S^3$, the Heegaard Floer knot complex $CFK^\infty(K)$ is a doubly filtered complex with a $U$-action that lowers each filtration by one.  Let the quotient complexes $A^+_k = C\{ \max \{i, j-k\}  \geq 0\}$ and $B^+ = C\{ i \geq 0\}$,
where $i$ and $j$ refer to the two filtrations.  Associated to each $k$, there is a graded, module map
$$v^+_k: A^+_k \rightarrow B^+.$$  Define $V_k$ be the $U$-exponent of $v_k^+$ at sufficiently high gradings.  This sequence of $\{V_k\}$ is non-increasing, i.e., $V_k\geq V_{k+1}$, and stabilizes at 0 for large $k$.  The $\nu^+$ invariant is just the minimum $k$ for which $V_k=0$.  We list some properties of $\nu^+$ below.
\begin{itemize}
\item[(N1)] It is a concordance invariant, taking nonnegative integer value $$\nu^+(K)=\min\{k\in \Z\,|\, V_k=0\}\geq 0$$

\item[(N2)] It is bounded above by the four-ball genus $$\nu^+(K)\leq g_4(K)$$

\item[(N3)] It is bounded below by Ozsv\'ath-Stipsicz-Szab\'o's one-parameter family of concordance invariants \cite{Ozsvath-Stipsicz-Szabo:2014-1}
$$|\Upsilon_K(t)|\leq t \max(\nu^+(K),\nu^+(-K))$$
\end{itemize}



Following the same argument of \cite[Theorem B]{Cha-Kim:2015-1}, we give a proof of Theorem \ref{main2}.  Note that $\nu^+$ is not a concordance homomorphism, so part (2) of the theorem does not follow from part (1) immediately.

\begin{proof}[Proof of Theorem \ref{main2}]

(1) Note that $P\# -P(U)$ is a slice pattern for any pattern $P$. Hence by (B2), $P(K)\#-P(U)$ is $\Q$-homology 0-bipolar if $K$ is $\Q$-homology 0-bipolar. From (B3) and (N1), we conclude that
\[\nu^+(P(K)\#-P(U))=\nu^+(-P(K)\#P(U))=0.\]

(2) It is known that $\nu^+$ is sub-additive under connected sum \cite[Theorem 1.4]{Bodnar-Celoria-Golla:2015-1}. From (1) and concordance invariance of $\nu^+$, we have an inequality which holds for any $P$:
\[ \nu^+(P(U))=\nu^+(P(K)\#-P(K)\#P(U))\leq \nu^+(P(K))+\nu^+(-P(K)\#P(U))=\nu^+(P(K)).\]
The proof of $\nu^+(P(K))\leq \nu^+(P(U))$ is similar.
\end{proof}

\begin{corollary}\label{theorem:satellite}If $K$ is a $\Q$-homology $0$-bipolar knot, then
\begin{enumerate}
\item There is a filtered chain homotopy equivalence $CFK^\infty(P(K))\oplus A_1\simeq CFK^\infty(P(U))\oplus A_2$ of chain complexes where $A_1$, $A_2$ are acyclic.
\item $V_k(P(K)\#-P(U))=V_k(-P(K)\#P(U))=0$ for all $k\geq 0$.
\item $V_k(P(K))=V_k(P(U))$ for all $k\geq 0$.
\end{enumerate}
\end{corollary}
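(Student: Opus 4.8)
The strategy is to reprise the proof of Theorem \ref{main2}, but to track the entire sequence $\{V_k\}$ and, for part (1), the filtered chain homotopy type of $CFK^\infty$. The one geometric input is the same as before: for any pattern $P$, the locally connect-summed pattern $Q:=P\#(-P(U))$ satisfies $Q(U)=P(U)\#-P(U)$, which is slice, so $Q$ is a slice pattern, while $Q(K)=P(K)\#-P(U)$. Hence by (B2), if $K$ is $\Q$-homology $0$-bipolar then $P(K)\#-P(U)$ is $\Q$-homology $0$-bipolar, and the same holds for its mirror $-P(K)\#P(U)$.

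For part (2), I would apply (B3) to get $V_0(P(K)\#-P(U))=V_0(-P(K)\#P(U))=0$, and then use that $\{V_k\}$ is non-negative and non-increasing in $k$ to conclude $V_k=0$ for all $k\geq 0$. For part (1), the plan is to first upgrade this to a chain-level statement: the vanishing $V_k(J)=V_k(-J)=0$ for all $k$, applied to $J=P(K)\#-P(U)$, should force $CFK^\infty(J)$ to be filtered chain homotopy equivalent to $CFK^\infty(U)$ up to an acyclic summand, i.e.\ $J$ is \emph{$\nu^+$-equivalent} (stably equivalent) to the unknot. Granting this, I would feed it into the K\"unneth formula $CFK^\infty(J_1\#J_2)\simeq CFK^\infty(J_1)\otimes CFK^\infty(J_2)$: writing $CFK^\infty(P(K))\otimes CFK^\infty(-P(U))\simeq CFK^\infty(U)\oplus A$ and tensoring with $CFK^\infty(P(U))$, the factor $CFK^\infty(-P(U))\otimes CFK^\infty(P(U))=CFK^\infty(-P(U)\#P(U))$ is that of the slice knot $-P(U)\#P(U)$, hence $\simeq CFK^\infty(U)\oplus(\textrm{acyclic})$. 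Since tensoring any complex with an acyclic one yields an acyclic complex, collecting terms gives $CFK^\infty(P(K))\oplus A_1\simeq CFK^\infty(P(U))\oplus A_2$ with $A_1,A_2$ acyclic, which is exactly (1). Finally (3) is immediate: $V_k$ is computed from $CFK^\infty$ and is unaffected by acyclic summands, so (1) yields $V_k(P(K))=V_k(P(U))$ for all $k$.

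The main obstacle is the chain-level upgrade inside (1): passing from the numerical vanishing $V_k(J)=V_k(-J)=0$ to an honest filtered chain homotopy equivalence $CFK^\infty(J)\simeq CFK^\infty(U)\oplus(\textrm{acyclic})$. This is precisely the assertion that $J$ is stably trivial, and it is the step that does \emph{not} follow formally from (B3); it requires a characterization of $\nu^+$-equivalence (stable equivalence) of knots in terms of the invariants $\{V_k\}$ of the knot and of its reverse mirror. I would isolate this as a separate algebraic lemma and either invoke such a characterization from the $\nu^+$-equivalence literature or, if a purely numerical characterization is not available in the needed generality, replace it by a chain-level consequence of $\Q$-homology $0$-bipolarity extracted directly from the definite $4$-manifolds $V_\pm$ bounding $J$, which induce the requisite local maps on $CFK^\infty$. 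Either way, this is the technical heart of the corollary; the K\"unneth bookkeeping and the deductions of (2) and (3) are routine.
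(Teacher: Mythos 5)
Your proposal matches the paper's proof in substance: the ``separate algebraic lemma'' you isolate---that the vanishing of $\nu^+$ (equivalently, of all $V_k$) for a knot and its reverse mirror forces its $CFK^\infty$ to be stably trivial---is precisely \cite[Theorem 1]{Hom:2015-2}, which the paper cites to deduce (1) directly from Theorem \ref{main2}(1), with (2) immediate from (N1) and (3) from the fact that $V_k$ ignores acyclic summands. Your extra K\"unneth bookkeeping is correct but unnecessary, since Hom's characterization is stated for a pair of knots and applies directly to $P(K)$ and $P(U)$ rather than only to $P(K)\#-P(U)$ versus the unknot.
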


\begin{proof}

(1) and (2) are equivalent to Theorem \ref{main2}(1) by \cite[Theorem 1]{Hom:2015-2} and (N1), respectively. (3) follows from (1) and the fact that $V_k$ is independent of the acyclic summand.
\end{proof}

Using (N3), we show that the Ozsv\'ath-Stipsicz-Szab\'o $\Upsilon$ invariant of satellites does not detect slice knots either.
\begin{corollary}\label{Upsilon}
If $K$ is a $\Q$-homology $0$-bipolar knot, then
$$\Upsilon_{P(K)}=\Upsilon_{P(U)}$$ for all patterns $P$.
\end{corollary}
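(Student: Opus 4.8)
The plan is to exploit the one feature that distinguishes $\Upsilon$ from $\nu^+$: unlike $\nu^+$, the invariant $\Upsilon$ is a \emph{concordance homomorphism}. By the work of Ozsv\'ath--Stipsicz--Szab\'o, for any knots $K_1, K_2$ one has $\Upsilon_{K_1\#K_2}=\Upsilon_{K_1}+\Upsilon_{K_2}$ and $\Upsilon_{-K}=-\Upsilon_K$. This additivity is precisely the ingredient that allowed part (2) of Theorem \ref{main2} to fail to follow formally from part (1) for $\nu^+$, but which here will let the corresponding statement for $\Upsilon$ drop out directly.

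First I would apply the homomorphism property to the knot $P(K)\#-P(U)$, obtaining
\[
\Upsilon_{P(K)\#-P(U)}=\Upsilon_{P(K)}-\Upsilon_{P(U)}.
\]
The goal then reduces to showing that the left-hand side is the zero function. To achieve this, I would invoke the bound (N3) applied to the single knot $J:=P(K)\#-P(U)$, namely
\[
|\Upsilon_{J}(t)|\leq t\,\max\bigl(\nu^+(J),\nu^+(-J)\bigr).
\]
Since $-J=-\bigl(P(K)\#-P(U)\bigr)=-P(K)\#P(U)$, both quantities inside the maximum are exactly the two values shown to vanish in Theorem \ref{main2}(1): $\nu^+(P(K)\#-P(U))=\nu^+(-P(K)\#P(U))=0$. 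Hence the right-hand side of the inequality is $0$ for every $t\in[0,2]$, forcing $\Upsilon_{P(K)\#-P(U)}\equiv 0$.

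Combining the two displays yields $\Upsilon_{P(K)}=\Upsilon_{P(U)}$ for every pattern $P$, as desired. I do not anticipate a genuine obstacle here: the argument is a short deduction from already-established results, with the only point requiring a moment's care being the bookkeeping $-(P(K)\#-P(U))=-P(K)\#P(U)$, so that the \emph{second} term in the maximum in (N3) is indeed covered by Theorem \ref{main2}(1) rather than requiring a separate vanishing statement. The essential conceptual remark, worth stating explicitly in the writeup, is that the homomorphism property of $\Upsilon$ makes the passage from the ``difference vanishes'' statement to the ``equality'' statement immediate, in contrast to the $\nu^+$ situation.
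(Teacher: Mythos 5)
Your argument is correct and is exactly the paper's proof, merely written out in more detail: the paper likewise uses the homomorphism property of $\Upsilon$ to reduce to showing $\Upsilon_{P(K)\#-P(U)}=0$, which it deduces from (N3) together with Theorem \ref{main2}(1). The bookkeeping point you flag, that $-(P(K)\#-P(U))=-P(K)\#P(U)$ is precisely the other knot covered by Theorem \ref{main2}(1), is a worthwhile clarification of the paper's terse one-line proof.
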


\begin{proof}
Since $\Upsilon$ is a concordance homomorphism, $$\Upsilon_{P(K)}-\Upsilon_{P(U)}=\Upsilon_{P(K)\#-P(U)}=0$$ by (N3) and Theorem \ref{main2}(1).
\end{proof}

We finish off our discuss with a question that is motivated by \cite{Cha-Kim:2015-1} and this paper.  From \cite[Proposition 5.1]{Hom:2014-1} and \cite[Theorem 3.1]{Cha-Kim:2015-1}, we know that the following conditions are equivalent:
  \begin{enumerate}
  \item Two knots $K$ and $K'$ are $\epsilon$-equivalent. That is, $\epsilon(K\#-K')=0$
  \item $\tau(P(K))=\tau(P(K'))$ for any pattern $P$.
  \item $\epsilon(P(K)\#-P(K'))=0$ for any pattern $P$.
  \item $\epsilon(P(K))=\epsilon(P(K'))$ for any pattern $P$.
  \end{enumerate}
In particular, any two $\Q$-homology 0-bipolar knots $K$ and $K'$ are $\epsilon$-equivalent \cite{Cha-Kim:2015-1}.  These two knots satisfies, in addition, $\Upsilon_{P(K)}=\Upsilon_{P(K')}(=\Upsilon_{P(U)})$ from Corollary \ref{Upsilon}.  In general, we ask:
\begin{question}Is $\Upsilon_{P(K)\#-P(K')}=0$ for all $\epsilon$-equivalent knots $K$ and $K'$ and patterns $P$?
\end{question}
Suppose there are $\epsilon$-equivalent knots $K$, $K'$, and a pattern $P$ such that $\Upsilon_{P(K)\#-P(K')}\neq 0$, then the knot $J=P(K)\#-P(K')$ will give the first example of knot with $\epsilon(J)=0$ and $\Upsilon_J\neq 0$. Previously, only a doubly-filtered complex $C$ was known to satisfies $\epsilon(C)=0$ and $\Upsilon_C\neq0$ \cite[Figure 6]{Ozsvath-Stipsicz-Szabo:2014-1}, but it is unclear whether such a complex can be realized as the knot Floer complex of some concrete knots. 

\bibliographystyle{amsalpha}
\renewcommand{\MR}[1]{}
\bibliography{research}

\def\cprime{$'$}
\providecommand{\bysame}{\leavevmode\hbox to3em{\hrulefill}\thinspace}
\providecommand{\MR}{\relax\ifhmode\unskip\space\fi MR }
\providecommand{\MRhref}[2]{%
  \href{http://www.ams.org/mathscinet-getitem?mr=#1}{#2}
}
\providecommand{\href}[2]{#2}
\begin{thebibliography}{CFHH13}

\bibitem[AT16]{Abe-Tagami:2015-1}
T.~Abe and K.~Tagami, \emph{Fibered knots with the same $0$-surgery and the
  slice-ribbon conjecture}, ar{X}iv:1502.01102, to appear in Math. Res. Lett.,
  2016.

\bibitem[Bak16]{Baker:2016-1}
K.~L. Baker, \emph{A note on the concordance of fibered knots}, J. Topol.
  \textbf{9} (2016), no.~1, 1--4.

\bibitem[BCG15]{Bodnar-Celoria-Golla:2015-1}
J.~Bodn\'{a}r, D.~Celoria, and M.~Golla, \emph{A note on cobordisms of
  algebraic knots}, ar{X}iv:1509.08821, 2015.

\bibitem[CDR14]{Cochran-Davis-Ray:2014-1}
T.~D. Cochran, C.~W. Davis, and A.~Ray, \emph{Injectivity of satellite
  operators in knot concordance}, J. Topol. \textbf{7} (2014), no.~4, 948--964.

\bibitem[CFHH13]{Cochran-Franklin-Hedden-Horn:2013-1}
T.~D. Cochran, B.~D. Franklin, M.~Hedden, and P.~D. Horn, \emph{Knot
  concordance and homology cobordism}, Proc. Amer. Math. Soc. \textbf{141}
  (2013), no.~6, 2193--2208.

\bibitem[CG83]{Casson-Gordon:1983-1}
A.~Casson and C.~McA. Gordon, \emph{A loop theorem for duality spaces and
  fibred ribbon knots}, Invent. Math. \textbf{74} (1983), no.~1, 119--137.

\bibitem[Cha07]{Cha:2007-1}
J.~C. Cha, \emph{The structure of the rational concordance group of knots},
  Mem. Amer. Math. Soc. \textbf{189} (2007), no.~885, x+95. \MR{MR2343079}

\bibitem[CHH13]{Cochran-Harvey-Horn:2013-1}
T.~D. Cochran, S.~Harvey, and P.~D. Horn, \emph{Filtering smooth concordance
  classes of topologically slice knots}, Geom. Topol. \textbf{17} (2013),
  no.~4, 2103--2162.

\bibitem[CK15]{Cha-Kim:2015-1}
J.~C. Cha and M.~H. Kim, \emph{Rasmussen $s$-invariants of satellites do not
  detect slice knots}, ar{X}iv:1512.08931, to appear in the Cochran memorial
  volume of J. Knot Theory Ramifications, 2015.

\bibitem[CP14]{Cha-Powell:2014-1}
J.~C. Cha and M.~Powell, \emph{Covering link calculus and the bipolar
  filtration of topologically slice links}, Geom. Topol. \textbf{18} (2014),
  no.~3, 1539--1579.

\bibitem[CP16]{Cha-Powell:2016-1}
\bysame, \emph{Casson towers and slice links}, to appear in Inv.\ Math, 2016.

\bibitem[Har80a]{Hartley:1980-1}
R.~Hartley, \emph{Invertible amphicheiral knots}, Math. Ann. \textbf{252}
  (1980), 103--109.

\bibitem[Har80b]{Hartley:1980-2}
R.~Hartley, \emph{Knots and involutions}, Math. Z. \textbf{171} (1980), no.~2,
  175--185. \MR{570907}

\bibitem[HKL12]{Hedden-Kirk-Livingston:2012-1}
M.~Hedden, P.~Kirk, and C.~Livingston, \emph{Non-slice linear combinations of
  algebraic knots}, J. Eur. Math. Soc. (JEMS) \textbf{14} (2012), no.~4,
  1181--1208.

\bibitem[HMS08]{Hirasawa-Murasugi-Silver:2008-1}
M.~Hirasawa, K.~Murasugi, and D.~S. Silver, \emph{When does a satellite knot
  fiber\textup{?}}, Hiroshima Math. J. \textbf{38} (2008), no.~3, 411--423.

\bibitem[Hom14]{Hom:2014-1}
Jennifer Hom, \emph{The knot {F}loer complex and the smooth concordance group},
  Comment. Math. Helv. \textbf{89} (2014), no.~3, 537--570.

\bibitem[Hom15]{Hom:2015-2}
J.~Hom, \emph{A survey on {H}eegaard {F}loer homology and concordance},
  ar{X}iv:1512.00383, 2015.

\bibitem[HW14]{Hom-Wu:2014-1}
J.~Hom and Z.~Wu, \emph{Four-ball genus bounds and a refinement of the
  {O}zv\'{a}th-{S}zab\'{o} $\tau$-invariant}, ar{X}iv:1401.1565 to appear in J.
  Symplectic Geom., 2014.

\bibitem[Kaw79]{Kawauchi:1979-1}
A.~Kawauchi, \emph{The invertibility problem on amphicheiral excellent knots},
  Proc. Japan Acad. Ser. A Math. Sci. \textbf{55} (1979), no.~10, 399--402.

\bibitem[Kaw80]{Kawauchi:1980-2}
\bysame, \emph{The $(2, 1)$-cable of the figure eight knot is rationally
  slice}, \url{http://www.sci.osaka-cu.ac.jp/~kawauchi/index.html}, 1980, a
  handwritten manuscript.

\bibitem[Kaw09]{Kawauchi:2009-1}
\bysame, \emph{Rational-slice knots via strongly negative-amphicheiral knots},
  Commun. Math. Res. \textbf{25} (2009), no.~2, 177--192.

\bibitem[Kir95]{Kirby:problem-list-1995-edition}
R.~C. Kirby, \emph{Problems in low dimensional topology}, 1995.

\bibitem[Miy94]{Miyazaki:1994-1}
K.~Miyazaki, \emph{Nonsimple, ribbon fibered knots}, Trans. Amer. Math. Soc.
  \textbf{341} (1994), 1--44.

\bibitem[Mor78]{Morton:1978-1}
H.~R. Morton, \emph{Infinitely many fibred knots having the same {A}lexander
  polynomial}, Topology \textbf{17} (1978), no.~1, 101--104. \MR{486796}

\bibitem[MP67]{Magnus-Peluso:1967-1}
W.~Magnus and A.~Peluso, \emph{On knot groups}, Comm. Pure Appl. Math.
  \textbf{20} (1967), 749--770. \MR{0222880}

\bibitem[OSS14]{Ozsvath-Stipsicz-Szabo:2014-1}
P.~Ozsv{\'a}th, A.~Stipsicz, and Z.~Szab{\'o}, \emph{Concordance homomorphisms
  from knot {F}loer homology}, ar{X}iv:1407.1795, 2014.

\bibitem[Ray13]{Ray:2013-1}
A.~Ray, \emph{Slice knots which bound punctured {K}lein bottles}, Algebr. Geom.
  Topol. \textbf{13} (2013), no.~5, 2713--2731.

\bibitem[Sta78]{Stallings:1978-1}
J.~R. Stallings, \emph{Constructions of fibred knots and links}, Algebraic and
  geometric topology (Proc. Sympos. Pure Math., Stanford Univ., Stanford,
  Calif., 1976), Part 2, Amer. Math. Soc., Providence, R.I., 1978, pp.~55--60.
  \MR{80e:57004}

\bibitem[Ush11]{Usher:2011-1}
M.~Usher, \emph{Open problems in geometric topology}, Low-dimensional and
  {S}ymplectic {T}opology, Proc. Sympos. Pure Math., vol.~82, Amer. Math. Soc.,
  2011, pp.~215--228.

\bibitem[Wu16]{Wu:2015-1}
Z.~Wu, \emph{A cabling formula for $\nu^+$-invariant}, ar{X}iv:1501.04749, to
  appear in Proc. Amer. Math. Soc, 2016.

\end{thebibliography}
\end{document}